\newcommand{\deleted}[1]{}
\newcommand{\delete}[1]{}
\newcommand{\mynotes}[1]{}
\newcommand\notes[1]{}
\newcommand\changed[1]{#1}
\newtheorem{theorem}{Theorem}[section]
\newtheorem{lemma}[theorem]{Lemma}
\newtheorem{prop}[theorem]{Proposition}
\theoremstyle{definition}
\newtheorem{defn}[theorem]{Definition}
\newtheorem{remark}[theorem]{Remark}
\newtheorem{prop-def}{Proposition-Definition}[section]
\newtheorem{coro-def}{Corollary-Definition}[section]
\newcommand{\nc}{\newcommand}
\nc{\tred}[1]{\textcolor{red}{#1}} \nc{\tblue}[1]{\textcolor{blue}{#1}} \nc{\tgreen}[1]{\textcolor{green}{#1}} \nc{\tpurple}[1]{\textcolor{purple}{#1}} \nc{\btred}[1]{\textcolor{red}{\bf #1}} \nc{\btblue}[1]{\textcolor{blue}{\bf #1}} \nc{\btgreen}[1]{\textcolor{green}{\bf #1}} \nc{\btpurple}[1]{\textcolor{purple}{\bf #1}}
\renewcommand{\Bbb}{\mathbb}
\newcommand{\efootnote}[1]{}
\newcommand\wyscco[1]{}
\renewcommand{\textbf}[1]{}
\nc{\mlabel}[1]{\label{#1}}  
\nc{\mcite}[1]{\cite{#1}}  
\nc{\mref}[1]{\ref{#1}}  
\nc{\mbibitem}[1]{\bibitem{#1}} 
\nc{\mlabel}[1]{\label{#1}{\hfill \hspace{1cm}{\bf{{\ }\hfill(#1)}}}}
\nc{\mcite}[1]{\cite{#1}{{\bf{{\ }(#1)}}}}  
\nc{\mref}[1]{\ref{#1}{{\bf{{\ }(#1)}}}}  
\nc{\mbibitem}[1]{\bibitem[\bf #1]{#1}} 
\renewcommand\geq{\geqslant}
\renewcommand\leq{\leqslant}
\renewcommand\bar[1]{\overline{#1}}
\renewcommand\tilde[1]{\widetilde{#1}}
\nc\kdot{\bfk}
\nc\simple{simple\xspace}
\nc{\rbw}{\mathfrak{R}} \nc{\brp}{\mathrm{brp}} \nc{\lead}{\mathrm{Lead}} \nc{\Id}{\mathrm{Id}} \nc{\Irr}{\mathrm{Irr}} \nc{\vx}{\sigma} \nc{\vy}{\tau} \nc{\dvx}{\sigma^{(1)}} \nc{\dvy}{\tau^{(1)}} \nc{\done}{\vep} \nc{\citep}[1]{\cite{#1}} \nc{\wt}{\mathrm{wt}} \nc{\bre}[1]{|#1|} \nc{\mapmonoid}{\frakM} \nc{\disjoint}{\frakM'}
\nc{\ncpoly}[1]{\langle #1\rangle}  
\nc{\mapm}[1]{\frakM(#1)}
\nc{\diff}[1]{{}^\NC\{ #1 \}} \nc{\disj}[1]{\{{#1}\}'} \nc{\mdisj}[1]{\frakM'(#1)} \nc{\brho}{\bar{\rho}} \nc{\om}{\bar{\frakm}} \nc{\frakn}{\mathfrak n} \nc{\ddeg}[1]{^{(#1)}} \nc{\opset}{X} \nc{\genset}{{Z}} \nc{\NC}{\mathrm{{NC}}} \nc{\leaf}{\mathrm{leaf}} \nc{\twig}{\mathrm{twig}} \nc{\fe}{\mathrm{fl}} \nc{\munderline}[1]{#1} \nc{\bo}{o} \nc{\dep}{\mathrm{dep}} \nc{\ofe}{\mathrm{ofl}} \nc{\dfe}{\mathrm{dfe}} \nc{\fex}{\mathrm{fex}} \nc{\dl}{\mathrm{dlex}} \nc{\db}{\mathrm{db}} \nc{\lex}{\mathrm{lex}} \nc{\clex}{\mathrm{clex}} \nc{\dgp}{\mathrm{dgp}} \nc{\dgx}{\mathrm{dgx}} \nc{\br}{\mathrm{br}} \nc{\obd}{\mathrm{odb}} \nc{\ob}{\mathrm{ob}}
\nc{\loc}{location\xspace}
\nc{\occ}{occurrence\xspace}
\nc{\occs}{occurrences\xspace}
\nc{\pla}{placement\xspace}
\nc{\plas}{placements\xspace}
\nc{\bin}[2]{ (_{\stackrel{\scs{#1}}{\scs{#2}}})}  
\nc{\binc}[2]{ \left (\!\! \begin{array}{c} \scs{#1}\\
    \scs{#2} \end{array}\!\! \right )}  
\nc{\bincc}[2]{  \left ( {\scs{#1} \atop
    \vspace{-1cm}\scs{#2}} \right )}  
\nc{\bs}{\bar{S}} \nc{\cosum}{\sqsubset} \nc{\la}{\longrightarrow} \nc{\rar}{\rightarrow} \nc{\dar}{\downarrow} \nc{\dprod}{**} \nc{\dap}[1]{\downarrow \rlap{$\scriptstyle{#1}$}} \nc{\md}{\mathrm{dth}} \nc{\uap}[1]{\uparrow \rlap{$\scriptstyle{#1}$}} \nc{\defeq}{\stackrel{\rm def}{=}} \nc{\disp}[1]{\displaystyle{#1}} \nc{\dotcup}{\ \displaystyle{\bigcup^\bullet}\ } \nc{\gzeta}{\bar{\zeta}} \nc{\hcm}{\ \hat{,}\ } \nc{\hts}{\hat{\otimes}} \nc{\barot}{{\otimes}} \nc{\free}[1]{\widetilde{#1}} \nc{\uni}[1]{\tilde{#1}} \nc{\hcirc}{\hat{\circ}} \nc{\leng}{\ell} \nc{\lleft}{[} \nc{\lright}{]} \nc{\lc}{\lfloor} \nc{\rc}{\rfloor}
\nc{\lb}{[} 
\nc{\rb}{]} 
\nc{\curlyl}{\left \{ \begin{array}{c} {} \\ {} \end{array}
    \right.  \!\!\!\!\!\!\!}
\nc{\curlyr}{ \!\!\!\!\!\!\!
    \left. \begin{array}{c} {} \\ {} \end{array}
    \right \} }
\nc{\longmid}{\left | \begin{array}{c} {} \\ {} \end{array}
    \right. \!\!\!\!\!\!\!}
\nc{\onetree}{\bullet} \nc{\ora}[1]{\stackrel{#1}{\rar}}
\nc{\ola}[1]{\stackrel{#1}{\la}}
\nc{\ot}{\otimes} \nc{\mot}{{{\boxtimes\,}}} \nc{\otm}{\overline{\boxtimes}} \nc{\sprod}{\bullet} \nc{\scs}[1]{\scriptstyle{#1}} \nc{\mrm}[1]{{\rm #1}} \nc{\msum}{\sum\limits}
\nc{\margin}[1]{\marginpar{\rm #1}}   
\nc{\dirlim}{\displaystyle{\lim_{\longrightarrow}}\,} \nc{\invlim}{\displaystyle{\lim_{\longleftarrow}}\,} \nc{\mvp}{\vspace{0.3cm}} \nc{\tk}{^{(k)}} \nc{\tp}{^\prime} \nc{\ttp}{^{\prime\prime}} \nc{\svp}{\vspace{2cm}} \nc{\vp}{\vspace{8cm}} \nc{\proofbegin}{\noindent{\bf Proof: }}
\nc{\proofend}{$\blacksquare$ \vspace{0.3cm}}
\nc{\modg}[1]{\!<\!\!{#1}\!\!>}
\nc{\intg}[1]{F_C(#1)} \nc{\lmodg}{\!<\!\!} \nc{\rmodg}{\!\!>\!} \nc{\cpi}{\widehat{\Pi}}
\nc{\sha}{{\mbox{\cyr X}}}  
\nc{\shap}{{\mbox{\cyrs X}}} 
\nc{\shpr}{\diamond}    
\nc{\shp}{\ast} \nc{\shplus}{\shpr^+}
\nc{\shprc}{\shpr_c}    
\nc{\msh}{\ast} \nc{\zprod}{m_0} \nc{\oprod}{m_1} \nc{\vep}{\varepsilon} \nc{\labs}{\mid\!} \nc{\rabs}{\!\mid}
\nc{\astarrow}{\overset{\raisebox{-2pt}{{\scriptsize $\ast$}}}{\rightarrow}}
\nc{\astlarrow}{\overset{\raisebox{-2pt}{{\scriptsize $\ast$}}}{\longrightarrow}}
\nc{\lastarrow}{\overset{\raisebox{-2pt}{{\scriptsize $\ast$}}}{\leftarrow}}
\nc{\mastarrow}[1]{\overset{\raisebox{-2pt}{{\scriptsize $#1$}}}{\rightarrow}}
\nc{\quvarrow}[3]{#1 \overset{q,u,v}{\longrightarrow}_{#3} #2}
\nc{\quvkto}[1]{f_{#1} \overset{q_{#1}, u_{#1}, v_{#1}}{\longrightarrow}_\phi g_{#1}}
\nc{\tvarrow}[3]{#1 \overset{(t,v)}{\longrightarrow}_{#3} #2}
\nc{\Supp}{{\rm Supp}}
\nc{\mpu}{u^{\ast}}
\nc{\mpv}{v^{\ast}}
\nc{\mpw}{w^{\ast}}
\nc{\mpx}{x^{\ast}}
\nc{\dps}{\dotplus}
\nc{\dth}{d} \nc{\mmbox}[1]{\mbox{\ #1\ }} \nc{\fp}{\mrm{FP}} \nc{\rchar}{\mrm{char}} \nc{\Fil}{\mrm{Fil}} \nc{\Mor}{Mor\xspace} \nc{\gmzvs}{gMZV\xspace} \nc{\gmzv}{gMZV\xspace} \nc{\mzv}{MZV\xspace} \nc{\mzvs}{MZVs\xspace} \nc{\Hom}{\mrm{Hom}} \nc{\id}{\mrm{id}} \nc{\im}{\mrm{im}} \nc{\incl}{\mrm{incl}} \nc{\map}{\mrm{Map}} \nc{\mchar}{\rm char} \nc{\nz}{\rm NZ} \nc{\supp}{\mathrm Supp}
\nc{\mo}{\mathbf o}
\nc{\pl}{\mathfrak{p}}
\nc{\Alg}{\mathbf{Alg}} \nc{\Bax}{\mathbf{Bax}} \nc{\bff}{\mathbf f} \nc{\bfk}{{\bf k}} \nc{\bfone}{{\bf 1}} \nc{\bfx}{\mathbf x} \nc{\bfy}{\mathbf y}
\nc{\base}[1]{\bfone^{\otimes ({#1}+1)}} 
\nc{\Cat}{\mathbf{Cat}} \delete{}
\nc{\detail}{\marginpar{\bf More detail}
    \noindent{\bf Need more detail!}
    \svp}
\nc{\Int}{\mathbf{Int}} \nc{\Mon}{\mathbf{Mon}}
\nc{\rbtm}{{shuffle }} \nc{\rbto}{{Rota-Baxter }} \nc{\remarks}{\noindent{\bf Remarks: }} \nc{\Rings}{\mathbf{Rings}} \nc{\Sets}{\mathbf{Sets}}
\nc{\vwpt}{{Let $V$ be a free $\bfk$-module with a $\bfk$-basis $W$ and let $\Pi$ be a \simple term-rewriting system on $V$ with respect to $W$.}\xspace}
\nc{\BA}{{\Bbb A}} \nc{\CC}{{\Bbb C}} \nc{\DD}{{\Bbb D}} \nc{\EE}{{\Bbb E}} \nc{\FF}{{\Bbb F}} \nc{\GG}{{\Bbb G}} \nc{\HH}{{\Bbb H}} \nc{\LL}{{\Bbb L}} \nc{\NN}{{\Bbb N}} \nc{\KK}{{\Bbb K}} \nc{\QQ}{{\Bbb Q}} \nc{\RR}{{\Bbb R}} \nc{\TT}{{\Bbb T}} \nc{\VV}{{\Bbb V}} \nc{\ZZ}{{\Bbb Z}}
\nc{\cala}{{\mathcal A}} \nc{\calc}{{\mathcal C}} \nc{\cald}{{\mathcal D}} \nc{\cale}{{\mathcal E}} \nc{\calf}{{\mathcal F}} \nc{\calg}{{\mathcal G}} \nc{\calh}{{\mathcal H}} \nc{\cali}{{\mathcal I}} \nc{\call}{{\mathcal L}} \nc{\calm}{{\mathcal M}} \nc{\caln}{{\mathcal N}} \nc{\calo}{{\mathcal O}} \nc{\calp}{{\mathcal P}} \nc{\calr}{{\mathcal R}} \nc{\cals}{{\mathcal S}} \nc{\calt}{{\mathcal T}} \nc{\calw}{{\mathcal W}}
\nc{\calv}{{\mathcal V}}
\nc{\calk}{{\mathcal K}} \nc{\calx}{{\mathcal X}} \nc{\CA}{\mathcal{A}}
\nc{\fraka}{{\mathfrak a}} \nc{\frakA}{{\mathfrak A}} \nc{\frakb}{{\mathfrak b}} \nc{\frakB}{{\mathfrak B}} \nc{\frakC}{{\mathfrak C}}
\nc{\frakD}{{\mathfrak D}} \nc{\frakH}{{\mathfrak H}} \nc{\frakM}{{\mathfrak M}} \nc{\bfrakM}{\overline{\frakM}} \nc{\frakm}{{\mathfrak m}} \nc{\frkP}{{\mathfrak P}}
\nc{\frakN}{{\mathfrak N}} \nc{\frakp}{{\mathfrak p}} \nc{\fraku}{{\mathfrak u}} \nc{\frakv}{{\mathfrak v}}
\nc{\frakQ}{{\mathfrak Q}}\nc{\frakR}{{\mathfrak R}} \nc{\frakS}{{\mathfrak S}}
\nc{\frakx}{{\mathfrak x}} \nc{\ox}{\bar{\frakx}} \nc{\frakX}{{\mathfrak X}} \nc{\fraky}{{\mathfrak y}}
\nc\dop{\delta}
\nc{\Reduce}{{\rm Red}}
\font\cyr=wncyr10 \font\cyrs=wncyr7
\nc{\redt}[1]{\textcolor{red}{#1}}
\nc{\jing}[1]{\textcolor{red}{Jing:#1}} 
\nc{\lio}[1]{}
\nc{\sz}[1]{\textcolor{green}{sz:#1}}
\nc{\szo}[1]{}
\nc{\xing}[1]{\textcolor{purple}{Xing:#1}}
\nc{\ws}[1]{\textcolor{blue}{{#1}}} 
\nc{\wsc}[1]{\textcolor{blue}{ws:#1}} 
\nc{\wsco}[1]{}
\nc{\wsn}[1]{\textcolor{magenta}{#1}} 
\nc{\medmid}{{\,~{\tiny \longmid}~\,}}
 \nc{\lbar}[1]{\overline{#1}}
\nc{\anf}{\bf $\phi$-NF} \nc{\lto}{\longrightarrow}
\nc{\gs}{Gr\"{o}bner-Shirshov\xspace}
\nc{\Gs}{Gr\"{o}bner-Shirshov\xspace}
\nc{\pgs}{potentially Gr\"{o}bner-Shirshov\xspace}
\nc{\Pgs}{Potentially Gr\"{o}bner-Shirshov\xspace}
\nc{\egs}{essentially Gr\"obner-Shirshov\xspace}
\nc{\cs}{convergent\xspace}
\nc{\pcs}{potentially convergent\xspace}
\nc{\ecs}{essentially convergent\xspace}
\nc{\brw}{\frakM(Z)} \nc{\irr}{{\rm Irr}} \nc{\pis}{\Pi_S}
\nc{\term}{term\xspace} \nc{\Term}{Term\xspace}
\nc{\re}[1]{R(#1)} \nc{\sumre}[2]{R^{#1}_{#2}}
\nc{\stars}[2]{#1|_{#2}}
\nc{\nbfk}{\bfk^{\times}} \nc{\revise}[1]{\textcolor{blue}{#1}}
\nc{\ord}{{\rm ord}} \nc{\tpi}{\to_{\Pi}}
\nc{\fix}[1]{\tilde{#1}} \nc{\topi}{\to_{\Pi_S}} \nc{\astarrowpi}{\astarrow_{\Pi_S}}
\nc{\pre}{\calp} \nc{\ocong}[1]{\langle #1\rangle}
\begin{document}
\title[Free operated monoids and Rewriting systems]{Free operated monoids and Rewriting systems}
\author{Jin Zhang} \address{School of Mathematics and Statistics,
Lanzhou University, Lanzhou, 730000, P.R. China}
\email{zj$_{-}$10@lzu.edu.cn}

\author{Xing Gao $^{*}$}\thanks{*Corresponding author} \address{School of Mathematics and Statistics,
Key Laboratory of Applied Mathematics and Complex Systems,
Lanzhou University, Lanzhou, 730000, P.R. China}
\email{gaoxing@lzu.edu.cn}

\hyphenpenalty=8000
\date{\today}

\begin{abstract}
The construction of bases for quotients is an important problem. In this paper, applying the method of
rewriting systems, we give a unified approach to construct
sections---an alternative name for bases in semigroup theory---for quotients of free operated monoids.
As applications, we capture sections of free $\ast$-monoids and free groups, respectively.
\end{abstract}

\subjclass[2010]{
16S15, 
08A70, 
06F05,  	
20M05
}

\keywords{
Operated monoids, term-rewriting systems, free $\ast$-monoids, free groups.
}

\maketitle
\vspace{-1.2cm}

\tableofcontents

\vspace{-1.3cm}

\allowdisplaybreaks


\section{Introduction}
In 1960,  A.\,G. Kurosh~\mcite{Ku} first introduced the concept of algebras with one or more linear operators.
An example of such algebras is the differential algebra led by the algebraic abstraction of differential operator
in analysis. Differential algebra is from a purely algebraic viewpoint to study
differentiation and nonlinear differential equations without using an underlying topology, and has been largely successful in many crucial
areas, such as uncoupling of nonlinear systems, classification of singular components, and detection of hidden equations~\mcite{Kol, Rit}.
Another important example of such algebras is the Rota-Baxter algebra (first called Baxter algebra)
which is the algebraic abstraction of integral operator in analysis~\mcite{Gub}.
Rota-Baxter algebra, originated from probability study~\mcite{Ba}, is related beautifully to the classical
Yang-Baxter equation, as well as to operads, to combinatorics and, through the Hopf
algebra framework of Connes and Kreimer, to the renormalization of quantum field theory~\mcite{Agu, Bai, BBGN, CK1, FGK, GZ}.
Other examples are also important, such as averaging algebra, Reynolds algebras, Nijenhuis algebras and Leroux's TD algebras~\mcite{CGM, Mill,Ler}.
Each of the above examples is an algebra with one linear operator, which is named operated
algebras by Guo~\mcite{Gop}.

\begin{defn}
An {\bf operated monoid} (resp. operated \bfk-algebra) is a monoid (resp. \bfk-algebra) $U$ together with a map (resp. \bfk-linear map) $P_U: U\to U$,
where $\bfk$ is a commutative unitary ring.
\label{de:mapset}
\end{defn}

In that paper~\mcite{Gop}, Guo constructed the free operated \bfk-algebra on a set.
Since the free operated \bfk-algebra as modules is precisely the free \bfk-module with basis the free operated monoid,
the crucial step of Guo's method is to construct the free operated monoid
on a set---the main object considered in this paper,
See also~\mcite{BCQ,GG}.

Abstract rewriting system is a branch of theoretical computer science, combining elements of logic, universal algebra, automated theorem proving and functional programming~\mcite{BN, Oh}.
The theory of convergent rewriting systems is successfully applied to find bases of free differential type~\mcite{GSZ} and free Rota-Baxter type algebras~\mcite{GGSZ},
which reveals the power of rewriting systems in the study of operators.

Let us point out that groups can be viewed as operated monoids if one considers the inverse operator as a map from the group to
itself. In the same way, many other important classes of monoids such as inverse monoids, I-monoids and $\ast$-monoids can also be fitted into the
framework of operated monoids. All of these examples can be obtained from free operated monoids by taking quotients modulo suitable operated
congruences. It is interesting to find bases for quotients. The bases of quotients in
semigroup theory are also called sections. In the present paper we obtained a method, in terms of convergent rewriting systems, to give
sections for quotients of free operated monoids. Our method is parallel to the famous Composition-Diamond lemma in Gr\"{o}bner-Shirshov theory~\mcite{BCQ}.
As applications, we capture sections of free $\ast$-monoids and free groups, respectively.

The organisation of this paper is as follows. In Section~\mref{ss:rse}, after reviewing the construction of
free operated monoids, we characterize the operated congruence generated by a binary relation on a free operated monoid (Proposition~\mref{prop}).
Then we associate to each binary relation on a free operated monoid a rewriting system (Definition~\mref{defn:rsts}).
We also establish a relationship between convergent rewriting systems on free operated monoids and sections of quotients of free operated monoids. (Theorem~\mref{thm:sect}). In Section~\mref{ss:sai}, as applications of our main result, we acquire respectively sections of free $\ast$-monoids (Theorem~\ref{thm:section}) and free groups (Theorem~\ref{thm:sectionI}).

\section{Rewrting systems and sections}
\mlabel{ss:rse}
In this section, based on rewriting systems on  free operated monoids $\frakM(X)$, we give an approach to construct
sections for quotients of $\frakM(X)$.

\subsection{Operated monoids and operated congruences}

The construction of free operated monoids was given in~\mcite{Gop, GSZ}. See also~\mcite{BCQ}.
We reproduce that construction here to review the notations. For any set $Y$, denote by $M(Y)$
the free monoid on $Y$.

Let $X$ be a set.  We proceed via finite stages $\frakM_n(X)$ defined recursively to
construct the free operated monoid $\frakM(X)$ on $X$.
The initial stage is $\frakM_0(X) := M(X)$ and $\frakM_1(X) := M(X \cup \lc \frakM_0(X)\rc)$,
where $\lc \frakM_0(X)\rc:= \{ \lc u\rc \mid u\in \frakM_0(X)\}$ is a disjoint copy of $\frakM_0(X)$.
The inclusion $X\hookrightarrow X \cup \lc \frakM_0\rc $ induces a monomorphism
$$i_{0}:\mapmonoid_0(X) = M(X) \hookrightarrow \mapmonoid_1(X) = M(X \cup \lc \frakM_0\rc  )$$
of monoids through which we identify $\mapmonoid_0(X) $ with its image in $\mapmonoid_1(X)$.

For~$n\geq 1$, assume inductively that
$\frakM_{ n}(X)$ has been defined and the embedding
$$i_{n-1,n}\colon  \frakM_{ n-1}(X) \hookrightarrow \frakM_{ n}(X)$$
has been obtained. Then we define
\begin{equation*}
 \label{eq:frakn}
 \frakM_{ n+1}(X) := M \big( X\cup\lc\frakM_{n}(X) \rc\big).
\end{equation*}
Since~$\frakM_{ n}(X) = M \big(X\cup \lc\frakM_{ n-1}(X) \rc\big)$ is a free monoid,
the injection
$$  \lc\frakM_{ n-1}(X) \rc \hookrightarrow
    \lc \frakM_{ n}(X) \rc $$
induces a monoid embedding
\begin{equation*}
    \frakM_{ n}(X) = M \big( X\cup \lc\frakM_{n-1}(X) \rc \big)
 \hookrightarrow
    \frakM_{ n+1}(X) = M\big( X\cup\lc\frakM_{n}(X) \rc \big).
\end{equation*}
Finally we define the monoid
$$ \frakM (X):=\dirlim \frakM_n = \bigcup_{n\geq 0}\frakM_{ n}(X),$$
whose elements are called {\bf bracketed words on $X$}.  Elements $w\in \frakM_n\setminus \frakM_{n-1}$ are said to have {\bf depth} $n$,
denoted by $\dep(w) = n$.

\begin{lemma}({\bf \cite{Gub}})
Every bracketed word $w\neq1$ has a unique decomposition $w= w_{1}\cdots w_{m},$
where $w_{i},1\leq i\leq m$, is in $X$ or in $\lc\frakM(X)\rc:=\{\lc w\rc\mid w\in \frakM(X)\}$.
We call $\bre{w}:= m$ the {\bf breadth} of $w$.
\end{lemma}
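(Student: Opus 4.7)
The plan is to induct on the depth $n = \dep(w)$, using the freeness of each $\frakM_n(X)$ as a monoid on its defining generating set.

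For the base case $n=0$, we have $w \in \frakM_0(X) = M(X)$, the free monoid on $X$. Since $w \neq 1$, freeness gives a unique decomposition $w = w_1 \cdots w_m$ with each $w_i \in X$. These letters lie in $X \subseteq X \cup \lc \frakM(X)\rc$, so both existence and uniqueness hold trivially.

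For the inductive step, suppose $\dep(w) = n+1$, so $w \in \frakM_{n+1}(X) = M\bigl(X \cup \lc \frakM_n(X)\rc\bigr)$. Because this is a free monoid on the set $X \cup \lc \frakM_n(X)\rc$, the element $w \neq 1$ factors uniquely as $w = w_1 \cdots w_m$ with each $w_i \in X \cup \lc \frakM_n(X)\rc \subseteq X \cup \lc \frakM(X)\rc$. This supplies the existence of the stated decomposition. To promote this local uniqueness to uniqueness in $\frakM(X) = \varinjlim \frakM_n(X)$, I would use two observations. First, the embeddings $i_{n-1,n}$ identify $\frakM_{n-1}(X)$ with its image in $\frakM_n(X)$, so two equal elements of $\frakM(X)$ lie in a common $\frakM_N(X)$ and agree there; any two stated decompositions of $w$ can therefore be compared inside a single free monoid $\frakM_{N+1}(X) = M(X \cup \lc \frakM_N(X)\rc)$. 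Second, I must verify that the generating set $X \cup \lc \frakM_N(X)\rc$ is actually disjoint from products of length $\geq 2$ of such generators and, crucially, that $X$ and $\lc \frakM(X)\rc$ are disjoint and that $\lc\cdot\rc$ is injective (so $\lc u\rc = \lc v\rc$ forces $u = v$); these facts are built into the construction because $\lc \frakM_{n-1}(X)\rc$ is taken as a disjoint copy at each stage, and are preserved under passage to the direct limit.

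Once these identifications are in place, any hypothetical alternative decomposition $w = w'_1 \cdots w'_{m'}$ with $w'_j \in X \cup \lc \frakM(X)\rc$ can be pushed into $\frakM_{N+1}(X)$ for $N$ large enough, where the uniqueness of factorization in the free monoid forces $m = m'$ and $w_i = w'_i$ for all $i$. This settles uniqueness and legitimizes the definition $\bre{w} := m$.

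The main technical obstacle, as I see it, is not the freeness step itself but the bookkeeping across depths: one must ensure that each constituent $w_i$ has a well-defined meaning in $\frakM(X)$ independent of the stage $\frakM_n(X)$ in which it is viewed, and that the bracket map $\lc\cdot\rc$ remains injective in the colimit. These follow because the transition maps $i_{n-1,n}$ are injective monoid homomorphisms and $\lc \frakM_{n-1}(X)\rc$ is a disjoint copy at each stage, but this is the point that needs to be explicitly invoked rather than glossed over.
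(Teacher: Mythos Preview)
The paper does not give its own proof of this lemma; it simply cites it from \cite{Gub}. So there is nothing to compare your argument against directly.

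Your argument is correct in substance, but note that the induction on depth is superfluous: your inductive step never invokes the inductive hypothesis. The real content is just this: any $w\in\frakM(X)\setminus\{1\}$ lies in some $\frakM_{n+1}(X)=M\bigl(X\cup\lc\frakM_n(X)\rc\bigr)$, and freeness of that monoid gives the factorization into elements of $X\cup\lc\frakM_n(X)\rc\subseteq X\cup\lc\frakM(X)\rc$; for uniqueness, two candidate factorizations with factors in $X\cup\lc\frakM(X)\rc$ live in a common $\frakM_{N+1}(X)$ for $N$ large enough, where free-monoid uniqueness forces them to coincide. Your discussion of the bookkeeping (injectivity of the transition maps, disjointness of $X$ and $\lc\frakM(X)\rc$, injectivity of $\lc\cdot\rc$) is exactly what is needed to make that last step rigorous. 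You could streamline the write-up by dropping the induction scaffolding and stating the argument once at a fixed sufficiently large level.
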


The following result shows that $\frakM(X)$ is the free object in the category of operated monoids.

\begin{lemma}({\bf \cite{Gop, Gub}})
Let $i_X:X \to \frakM(X)$ be the natural embeddings. Then the triple $(\frakM(X),\lc\ \rc, i_X)$ is the free operated monoid on $X$,
where $$\lc \,\rc: \frakM(X) \to \frakM(X),\, w\mapsto \lc w\rc$$
is an operator on $\frakM(X)$.
\mlabel{lem:freetm}
\end{lemma}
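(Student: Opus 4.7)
The goal is to show that for any operated monoid $(U, P_U)$ and any set map $f \colon X \to U$, there exists a unique operated monoid morphism $\bar f \colon \frakM(X) \to U$ extending $f$ in the sense that $\bar f \circ i_X = f$. My plan is to build $\bar f$ level by level along the filtration $\frakM_0(X) \subseteq \frakM_1(X) \subseteq \cdots$, then take a colimit.

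\textbf{Construction (existence).} Since $\frakM_0(X) = M(X)$ is the free monoid on $X$, the universal property of free monoids extends $f$ uniquely to a monoid morphism $\bar f_0 \colon \frakM_0(X) \to U$. Suppose inductively that a monoid morphism $\bar f_{n-1} \colon \frakM_{n-1}(X) \to U$ has been defined so that it extends $\bar f_{n-2}$ and satisfies $\bar f_{n-1}(\lc w \rc) = P_U(\bar f_{n-2}(w))$ whenever $\lc w \rc \in \lc \frakM_{n-2}(X) \rc \subseteq \frakM_{n-1}(X)$. Define a set map
\[
    g_n \colon X \cup \lc \frakM_{n-1}(X) \rc \to U,
    \qquad x \mapsto f(x),
    \qquad \lc w \rc \mapsto P_U(\bar f_{n-1}(w)).
\]
Because $\frakM_n(X) = M\bigl(X \cup \lc \frakM_{n-1}(X) \rc\bigr)$ is the free monoid on this alphabet, $g_n$ extends uniquely to a monoid morphism $\bar f_n \colon \frakM_n(X) \to U$. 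By construction $\bar f_n$ agrees with $\bar f_{n-1}$ on the generators of $\frakM_{n-1}(X) = M\bigl(X \cup \lc \frakM_{n-2}(X) \rc\bigr)$, hence everywhere on $\frakM_{n-1}(X)$, so the sequence is compatible with the embeddings $i_{n-1,n}$.

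\textbf{Passage to the colimit.} Since every $w \in \frakM(X) = \bigcup_{n \geq 0} \frakM_n(X)$ lies in some $\frakM_n(X)$, the formula $\bar f(w) := \bar f_n(w)$ is well-defined, and gives a monoid morphism $\bar f \colon \frakM(X) \to U$ with $\bar f \circ i_X = f$. To check compatibility with the operators, take $w \in \frakM(X)$ with $w \in \frakM_{n-1}(X)$; then $\lc w \rc \in \frakM_n(X)$ and
\[
    \bar f(\lc w \rc) = \bar f_n(\lc w \rc) = g_n(\lc w \rc) = P_U(\bar f_{n-1}(w)) = P_U(\bar f(w)),
\]
so $\bar f$ is an operated monoid morphism.

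\textbf{Uniqueness.} Suppose $h \colon \frakM(X) \to U$ is another operated monoid morphism with $h \circ i_X = f$. Since $h$ restricted to $X$ equals $f$, and $\frakM_0(X) = M(X)$ is generated as a monoid by $X$, we have $h|_{\frakM_0(X)} = \bar f_0$. Inductively, if $h|_{\frakM_{n-1}(X)} = \bar f_{n-1}$, then for any generator $\lc w \rc$ of $\frakM_n(X)$ with $w \in \frakM_{n-1}(X)$, the operated condition forces $h(\lc w \rc) = P_U(h(w)) = P_U(\bar f_{n-1}(w)) = \bar f_n(\lc w \rc)$, and since $h$ is a monoid morphism it agrees with $\bar f_n$ on $\frakM_n(X)$. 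Therefore $h = \bar f$ on $\frakM(X)$.

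The only non-mechanical point is organising the two nested universal properties cleanly: at each stage one uses the universal property of a free monoid on an alphabet that itself grows by the operator, and must verify that the newly constructed $\bar f_n$ is compatible with $\bar f_{n-1}$ before passing to the colimit. This is the step where sloppiness would cause the induction to break, but given the recursive definition of $\frakM(X)$ recalled just above the statement, the verification is straightforward.
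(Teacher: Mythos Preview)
Your argument is correct and is precisely the standard proof: build $\bar f$ inductively on the filtration $\frakM_n(X)$ using, at each stage, the universal property of the free monoid on the enlarged alphabet $X\cup\lc\frakM_{n-1}(X)\rc$, check compatibility with the previous stage, and pass to the direct limit; uniqueness then follows by the same induction. One cosmetic point: your inductive hypothesis as stated refers to $\bar f_{n-2}$, so strictly speaking the step $n=1$ needs to be read with the operator condition understood as vacuous; you may wish to phrase the hypothesis simply as ``$\bar f_{n-1}$ has been defined extending $f$ and satisfying $\bar f_{n-1}(\lc w\rc)=P_U(\bar f_{n-1}(w))$ for all $\lc w\rc\in\frakM_{n-1}(X)$''.

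As for comparison with the paper: the paper does not actually prove this lemma. It is stated with a citation to \cite{Gop, Gub} and no proof is given. Your write-up is essentially the argument one finds in those references (in particular in Guo's construction in \cite{Gop}), so there is no divergence in approach to discuss.
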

The concept of $\star$-bracketed words plays a crucial role in
the theory of Gr\"{o}bner-Shirshov bases~\mcite{BC}.

\begin{defn}
Let $X$ be a set, $\star$ a symbol not in $X$ and $X^\star = X \cup \{\star\}$.
\begin{enumerate}
\item By a {\bf  $\star$-bracketed word} on $X$, we mean any bracketed word in $\mapm{X^\star}$ with exactly one occurrence of $\star$, counting multiplicities. The set of all $\star$-bracketed words on $X$ is denoted by $\frakM^{\star}(X)$.
\item For $q\in \frakM^\star(X)$ and $u \in  \frakM({X})$, we define $q|_{\star \mapsto u}$ to be the bracketed word on $X$ obtained by replacing the symbol $\star$ in $q$ by $u$, for convenience, denoted it by  $q|_{ u}$.
\item Let $u,v\in \frakM(X)$. We say that $u$ is a {\bf bracketed subword} of $v$, if there exist $q\in \frakM^{\star}(X) $ such that $v=q|_u$.

Generally, with $\star_1,\star_2$ distinct symbols not in
$X$, set $X^{\star 2} := X\cup \{\star_1, \star_2\}$.

\item We define an {$(\star_1,
    \star_2)${\bf -bracketed word} on $X$} to be a bracketed word in
  $\frakM(X^{\star 2})$ with exactly one occurrence of each of
  $\star_i$, $i=1,2$. The set of all
  $(\star_1, \star_2)$-bracketed words on $X$
  is denoted by $\frakM^{\star_1,\star_2}(X)$.

\item For $q\in \frakM^{\star_1,\star_2}(X)$ and $u_1,u_2 \in
   \frakM^{\star_1,\star_2}(X)$, we define
$$\stars{q}{u_1, u_2} := \stars{q}{\star_1 \mapsto u_1, \star_2 \mapsto u_2}$$
to be obtained by replacing the letters $\star_i$ in $q$ by $u_i$ for
$i=1,2$.
\end{enumerate}
\end{defn}

\begin{remark}
If $p|_u=p|_v$ with $p\in \frakM^{\star}(X)$ and $u,v\in\frakM(X)$,
then $u=v$ by the freeness of $\frakM(X)$.
\mlabel{re}
\end{remark}

The concept of operated congruences will be used throughout the paper.

\begin{defn}
An equivalence $R$ on $\frakM(X)$ is {\bf operated congruence} if

\begin{enumerate}
\item[(C1)]  $(\forall a,b,c\in \frakM(X))\  (a,b)\in R\Rightarrow(ac,bc)\in R;$

\item[(C2)]   $(\forall a,b,c\in \frakM(X)) \ (a,b)\in R\Rightarrow(ca,cb)\in R;$

\item[(C3)]   $(\forall a,b\in \frakM(X))\  (a,b)\in R\Rightarrow(\lc a \rc, \lc b \rc)\in R.$
\end{enumerate}
\mlabel{defn:ocong}
\end{defn}

Let $R$ be a binary relation on $\frakM(X)$. There is a unique smallest operated congruence $\ocong{R}$ on $\frakM(X)$ containing $R$, which will be described in the following. Define
\begin{equation}
R^c:=\{ (q|_{a}, q|_{b}) \mid q\in \frakM^{\star}(X), (a,b)\in R \}.
\mlabel{eq:rcc}
\end{equation}

We record some basic properties of $R^c$. For any $u\in \frakM(X)$, define recursively $\lc u\rc^{(0)}:= u$ and
$\lc u\rc^{(k+1)}:= \lc \lc u\rc^{(k)}\rc$ for $k\geq 0$.

\begin{lemma}
 $R^c$ is the smallest binary relation containing $R$ and satisfy (C1), (C2) and (C3).
 \mlabel{lem:Rc}
\end{lemma}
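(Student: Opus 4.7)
The plan is to verify three things: (i) $R\subseteq R^c$, (ii) $R^c$ satisfies (C1), (C2), (C3), and (iii) any binary relation $S$ on $\frakM(X)$ that contains $R$ and satisfies (C1), (C2), (C3) must contain $R^c$. Statement (i) is immediate by taking $q=\star\in\frakM^\star(X)$ in the defining equation~\eqref{eq:rcc}, since then $q|_a=a$ and $q|_b=b$.

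For (ii), given $(q|_a,q|_b)\in R^c$ with $(a,b)\in R$ and $q\in\frakM^\star(X)$, I would produce for each closure condition an explicit new $\star$-bracketed word witnessing the required pair. Concretely: for (C1), take $q'=q\cdot c\in\frakM^\star(X)$ so $q'|_a=q|_a\cdot c$ and $q'|_b=q|_b\cdot c$; for (C2), take $q'=c\cdot q$; for (C3), take $q'=\lc q\rc\in\frakM^\star(X)$, which gives $q'|_a=\lc q|_a\rc$ and $q'|_b=\lc q|_b\rc$. In each case $(q'|_a,q'|_b)\in R^c$, as required.

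The substantive step is (iii), which I would prove by induction on $\dep(q)$. The base case $\dep(q)=0$ means $q\in M(X\cup\{\star\})$. By the unique decomposition of elements of a free monoid, we may write $q=w_1\star w_2$ with $w_1,w_2\in M(X)\subseteq\frakM(X)$; since $S\supseteq R$ contains $(a,b)$, two applications of (C2) and (C1) give $(w_1aw_2,w_1bw_2)=(q|_a,q|_b)\in S$. For the inductive step, assume the claim holds for all $\star$-bracketed words of depth less than $n$ and let $\dep(q)=n\geq 1$. Using the unique decomposition of the bracketed word $q$ given by the lemma preceding the statement, write $q=v_1\cdots v_m$ where each $v_i$ lies in $X\cup\{\star\}\cup\lc\frakM(X^\star)\rc$ and exactly one factor, say $v_j$, carries the unique occurrence of $\star$. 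If $v_j=\star$, we conclude as in the base case by (C1) and (C2). Otherwise $v_j=\lc q'\rc$ with $q'\in\frakM^\star(X)$ and $\dep(q')<n$; by the inductive hypothesis $(q'|_a,q'|_b)\in S$, then (C3) gives $(\lc q'|_a\rc,\lc q'|_b\rc)\in S$, and finally repeated use of (C1) and (C2) with the surrounding factors $v_1,\dots,v_{j-1},v_{j+1},\dots,v_m$ yields $(q|_a,q|_b)\in S$.

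The only genuine obstacle is bookkeeping in the inductive step: one must invoke the uniqueness of the bracketed-word decomposition (so that the single $\star$ sits unambiguously in one factor $v_j$) together with Remark~\ref{re} to ensure that substituting $a$ or $b$ for $\star$ commutes with the factorization. Once this is in hand, the induction closes cleanly and establishes $R^c\subseteq S$, which together with (i) and (ii) proves that $R^c$ is the smallest binary relation containing $R$ and closed under (C1), (C2), (C3).
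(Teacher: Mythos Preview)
Your proof is correct and follows the same three-step structure as the paper (containment $R\subseteq R^c$, closure under (C1)--(C3), and minimality). The only difference is that for the minimality step the paper simply asserts ``$(q|_a,q|_b)\in T$ by (C1), (C2) and (C3)'' without further justification, whereas you supply the explicit induction on $\dep(q)$ that actually establishes this; your version is thus more complete, but not a different approach.
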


 \begin{proof}
According to the definition of $R^c$, we have $R\subseteq R^c$ by choosing $q=\star$.
Let $c\in \frakM(X)$ and
$$(q|_{a}, q|_{b})\in R^c \text{ with } q\in \frakM^\star(X), (a,b)\in R.$$
Write $q_1:= qc.$ Then by Eq.~(\mref{eq:rcc}),
$$((q|_{a})c, (q|_{b})c) = ( (qc)|_a, (qc)|_b) = (q_1|_a, q_1|_b) \in R^c.$$
So $R^c$ satisfies (C1). By symmetry, $R^c$ also satisfies ${\rm (C2)}$. To prove $R^c$ satisfies ${\rm (C3)}$, let $q_2:= \lc q\rc\in \frakM^\star(X)$. Then again from Eq.~(\mref{eq:rcc}),
$$(\lc q|_a\rc,  \lc q|_b\rc) = (q_2|_a, q_2|_b) \in R^c.$$

Suppose that $T$ is a binary relation containing $R$ and satisfying (C1), (C2) and (C3).
Let $(q|_a, q|_b)\in R^c$ with $q\in \frakM^{\star}(X)$ and $(a,b)\in R$.
Then $(a, b) \in T$ by $R\subseteq T$. Hence $(q|_a, q|_b) \in T$ by (C1), (C2) and (C3).
\end{proof}

For a binary relation $R$ on $\frakM(X)$, write $R^{-1}:=\{(b,a)\mid (a,b)\in R\} $.

 \begin{lemma}
Let $R,S$ be binary relations on $\frakM(X) $. Then
\begin{enumerate}
\item $R\subseteq S\Rightarrow R^c\subseteq S^c;$ \mlabel{it:brm1}

\item $(R^{-1})^c = (R^c)^{-1};$ \mlabel{it:brm2}

\item $(R\cup S)^c = R^c \cup S^c.$ \mlabel{it:brm3}
\end{enumerate}
\mlabel{lem:Rcp}
 \end{lemma}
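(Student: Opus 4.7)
The plan is to prove all three parts by straightforward unpacking of the definition of $R^c$ in Eq.~(\mref{eq:rcc}). Each element of $R^c$ is by definition a pair of the form $(q|_a, q|_b)$ coming from some $q\in\frakM^\star(X)$ and $(a,b)\in R$, so the three claims should reduce to chasing these witnesses.

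For item (\mref{it:brm1}), I would take an arbitrary pair $(q|_a, q|_b)\in R^c$ with $q\in\frakM^\star(X)$ and $(a,b)\in R$. Since $R\subseteq S$, we have $(a,b)\in S$, and then the same $q$ witnesses $(q|_a, q|_b)\in S^c$. For item (\mref{it:brm2}), I would prove both inclusions simultaneously via the equivalences
$$(x,y)\in (R^{-1})^c \iff \exists\, q\in\frakM^\star(X),\ \exists\, (a,b)\in R^{-1}\ \text{with } x=q|_a,\ y=q|_b,$$
$$(x,y)\in (R^c)^{-1} \iff (y,x)\in R^c \iff \exists\, q,\ \exists\, (a',b')\in R\ \text{with } y=q|_{a'},\ x=q|_{b'}.$$
Setting $a=b'$ and $b=a'$ (so $(a,b)\in R^{-1}$ iff $(b',a')\in R^{-1}$ iff $(a',b')\in R$) makes the two conditions literally identical.

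For item (\mref{it:brm3}), the inclusion $R^c\cup S^c\subseteq (R\cup S)^c$ is immediate from (\mref{it:brm1}) applied to $R\subseteq R\cup S$ and $S\subseteq R\cup S$. For the reverse inclusion, I take $(q|_a, q|_b)\in (R\cup S)^c$ with $(a,b)\in R\cup S$ and split on whether $(a,b)\in R$ or $(a,b)\in S$, placing the pair in $R^c$ or $S^c$ accordingly.

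I do not expect any serious obstacle: everything is a direct consequence of the form of elements of $R^c$ given by Eq.~(\mref{eq:rcc}). The only subtlety worth stating cleanly (and which Remark~\mref{re} already guarantees) is that the decomposition $(q|_a, q|_b)$ determines $(a,b)$ once $q$ is fixed, so the case split in (\mref{it:brm3}) is unambiguous.
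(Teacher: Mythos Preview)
Your proof is correct and follows essentially the same approach as the paper: all three parts are handled by directly unpacking the definition in Eq.~(\mref{eq:rcc}), with item~(\mref{it:brm3}) using item~(\mref{it:brm1}) for one inclusion and a case split on the witness $(a,b)$ for the other. Your closing remark invoking Remark~\mref{re} is unnecessary, since the case split is on the already-chosen witness $(a,b)\in R\cup S$, not on the pair $(q|_a,q|_b)$ itself, so no uniqueness of decomposition is required.
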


\begin{proof}
(\mref{it:brm1}) This follows directly from Eq.~(\mref{eq:rcc}).

(\mref{it:brm2})
Let $(q|_a, q|_b)\in (R^{-1})^c$ with $(a,b)\in R^{-1}$ and $q\in\frakM^{\star}(X).$
Then
$$(b,a)\in R,\, (q|_b, q|_a)\in R^c\,\text{ and }\,  (q|_a, q|_b)\in (R^c)^{-1},$$
and so $(R^{-1})^c \subseteq  (R^c)^{-1}$.
Conversely, let $(q|_a,q|_b)\in (R^c)^{-1}$ with $q\in\frakM^{\star}(X).$ Then
$$(q|_b,q|_a)\in R^c,\, (b,a)\in R\,\text{ and }\,  (a,b)\in R^{-1},$$
which implies that
$$(q|_a, q|_b)\in (R^{-1})^c\,\text{ and }\,  (R^c)^{-1}\subseteq  (R^{-1})^c,$$
as needed.

(\mref{it:brm3})
By Item~(\mref{it:brm1}), we have
$$R^c\subseteq(R\cup S)^c\,\text{ and }\, S^c\subseteq(R\cup S)^c,$$
and thus $R^c\cup S^c\subseteq (R\cup S)^c.$ Conversely, let $(q|_a, q|_b)\in(R\cup S)^c$
with $(a,b)\in R\cup S$. Then
$$ (a,b)\in R \, \text{ or } \, (a,b)\in S  \, \text{ and so } \,  (q|_a, q|_b)\in R^c\,  \text{ or }\,(q|_a, q|_b)\in S^c.$$
Hence $(q|_a, q|_b)\in R^c\cup S^c$ and $(R\cup S)^c \subseteq R^c\cup S^c$.
\end{proof}

 \begin{lemma}
 Let $R$ be a binary relation on $\frakM(X) $ satisfying (C1), (C2) and (C3). Then, so is $R^n (=R\circ R\circ \cdots \circ R )$ for all $n\geq 1$.
 \mlabel{lem:Rn}
 \end{lemma}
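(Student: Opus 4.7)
The plan is to proceed by induction on $n \geq 1$. The base case $n=1$ is immediate since $R^1 = R$ satisfies (C1), (C2), (C3) by hypothesis. For the inductive step, assume the result holds for $R^n$, and note that by associativity of relation composition we may write $R^{n+1} = R^n \circ R$, so that a pair $(a,b)$ lies in $R^{n+1}$ precisely when there exists some intermediate $d \in \frakM(X)$ with $(a,d)\in R^n$ and $(d,b)\in R$.

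Given this decomposition, verification of each axiom is a direct chase: for (C1), fix $(a,b) \in R^{n+1}$ with witness $d$, and let $c \in \frakM(X)$; the inductive hypothesis gives $(ac,dc) \in R^n$, and (C1) for $R$ gives $(dc,bc)\in R$, hence $(ac,bc)\in R^{n+1}$. The verification of (C2) is entirely symmetric, using (C2) for $R^n$ (by induction) and for $R$ (by hypothesis). For (C3), the same pattern applies: from $(a,d)\in R^n$ and $(d,b)\in R$, the inductive hypothesis yields $(\lc a\rc,\lc d\rc)\in R^n$, and (C3) for $R$ yields $(\lc d\rc,\lc b\rc)\in R$, whence $(\lc a\rc,\lc b\rc)\in R^{n+1}$.

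There is no genuine obstacle in this argument; everything follows mechanically from the definition of composition of binary relations together with the inductive hypothesis. The only point requiring any care is to make sure that the intermediate element $d$ produced by the decomposition $R^{n+1} = R^n \circ R$ is preserved under right multiplication by $c$, left multiplication by $c$, and bracketing, respectively — but this is precisely what the three closure properties (C1), (C2), (C3) for $R^n$ and for $R$ guarantee at each step.
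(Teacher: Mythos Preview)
Your proof is correct and takes essentially the same approach as the paper's. The only cosmetic difference is that the paper argues directly---writing out the full chain $a=v_0,v_1,\dots,v_{n-1},v_n=b$ with each consecutive pair in $R$ and applying (C1), (C2), (C3) to every link at once---whereas you package the same chain argument as an induction via $R^{n+1}=R^n\circ R$; the underlying idea is identical.
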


 \begin{proof}
 Let $(a,b)\in R^n$. Then there exist $v_1,v_2,\cdots,v_{n-1}$ in $\frakM(X) $ such that $$(a,v_1),(v_1,v_2),\cdots,(v_{n-1},b)\in R.$$
 Because $R$ satisfies (C1),(C2) and (C3), it follows that, for all $c$ in $\frakM(X)$, $$(ca,cv_1),(cv_1,cv_2),\cdots,(cv_{n-1},cb)\in R,$$
 $$(ac,v_1c),(v_1c,v_2c),\cdots,(v_{n-1}c,bc)\in R,$$
 $$(\lc a \rc,\lc v_1 \rc),(\lc v_1 \rc,\lc v_2 \rc),\cdots,(\lc v_{n-1} \rc,\lc b \rc)\in R.$$
 So
 $$(ca,cb),(ac,ab),(\lc a \rc,\lc b \rc)\in R^n,$$ as required.
 \end{proof}

We recall the construction of equivalences $R^e$ generated by a binary relation $R$ on a set $X$~\cite{JMH}.
Write $$R^\infty :=\cup_{n\geq 1} R^n\,\text{ and }\, 1_X := \{(x,x) \mid x\in X\}.$$

\begin{lemma}(\cite{JMH})
Let $R$ be a binary relation on a set $X$. Then $R^e = [R\cup R^{-1}\cup 1_X]^\infty$. More precisely,
$(x,y)\in R^e$ if and only if either $x=y$ or,  for some $n$ in $\bf N$, there is a sequence of elements
$$x=z_1, z_2, \cdots,  z_n=y,$$  in which, for each $i$ in $\{1,2,\cdots,n-1\}$, either $(z_i,z_{i+1})\in R$ or $(z_{i+1},z_i)\in R.$
\mlabel{lem:equ}
\mlabel{lem:pro}
 \end{lemma}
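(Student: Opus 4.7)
The plan is to prove this by the standard closure argument, showing that $T := [R\cup R^{-1}\cup 1_X]^\infty$ is an equivalence containing $R$ and that it is contained in every such equivalence. The second (``more precisely'') assertion is just an unraveling of the definitions of $R^n$ and union, so once the first equality is established the reformulation in terms of chains $x=z_1,\ldots,z_n=y$ follows immediately.

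First I would verify that $T$ is an equivalence relation on $X$. Reflexivity follows from $1_X\subseteq R\cup R^{-1}\cup 1_X\subseteq T$. Symmetry follows from the observation that $(R\cup R^{-1}\cup 1_X)^{-1}=R\cup R^{-1}\cup 1_X$, so reversing a chain of pairs taken from $R\cup R^{-1}\cup 1_X$ produces another such chain; concretely, if $(x,y)\in [R\cup R^{-1}\cup 1_X]^n$ via $x=z_1,\ldots,z_n=y$, then $y=z_n,z_{n-1},\ldots,z_1=x$ witnesses $(y,x)\in [R\cup R^{-1}\cup 1_X]^n\subseteq T$. Transitivity follows because concatenating a chain of length $n$ from $x$ to $y$ with one of length $m$ from $y$ to $z$ yields a chain of length $n+m-1$ from $x$ to $z$, so $[R\cup R^{-1}\cup 1_X]^n\circ [R\cup R^{-1}\cup 1_X]^m\subseteq [R\cup R^{-1}\cup 1_X]^{n+m-1}\subseteq T$. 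Since $R\subseteq T$ trivially, the minimality of $R^e$ gives $R^e\subseteq T$.

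For the reverse inclusion $T\subseteq R^e$, I would use that $R^e$ is an equivalence containing $R$: by symmetry $R^{-1}\subseteq R^e$, by reflexivity $1_X\subseteq R^e$, hence $R\cup R^{-1}\cup 1_X\subseteq R^e$. Transitivity of $R^e$ then gives, by induction on $n$, that $[R\cup R^{-1}\cup 1_X]^n\subseteq R^e$ for every $n\geq 1$; taking the union over $n$ yields $T\subseteq R^e$. Combining the two inclusions gives $R^e=T$, and the chain characterization is now immediate: $(x,y)\in T$ means either $(x,y)\in 1_X$, i.e.\ $x=y$, or $(x,y)$ belongs to some $[R\cup R^{-1}\cup 1_X]^n$, which by definition of composition of relations is exactly the existence of a chain $x=z_1,\ldots,z_n=y$ with each consecutive pair in $R\cup R^{-1}\cup 1_X$; collapsing the possible $1_X$-steps (i.e.\ removing repetitions $z_i=z_{i+1}$) reduces to the form stated, where each step is in $R$ or $R^{-1}$.

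Since every ingredient is a direct manipulation of the definitions, I do not anticipate a serious obstacle; the only delicate bookkeeping is making sure the indices match when asserting $[R\cup R^{-1}\cup 1_X]^n\circ[R\cup R^{-1}\cup 1_X]^m\subseteq [R\cup R^{-1}\cup 1_X]^{n+m-1}$ and handling the trivial case $x=y$ separately so that the ``$n\in\mathbf{N}$'' quantifier in the second formulation is nonvacuous. As the result is classical and the reference \cite{JMH} is cited, I would keep the proof brief, essentially presenting the two inclusions and the chain reformulation as above.
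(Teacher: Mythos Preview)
The paper does not give its own proof of this lemma; it merely quotes the result from \cite{JMH} and moves on. Your argument is the standard closure argument and is correct. One tiny bookkeeping slip: with the convention $S^n = S\circ\cdots\circ S$ ($n$ factors), one has $S^n\circ S^m = S^{n+m}$, not $S^{n+m-1}$; this is harmless since either power lies in $T=\bigcup_{k\geq 1}S^k$, and you already flagged the indexing as the only delicate point.
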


Now we arrive at the position to give the description of operated congruences generated by binary relations.

 \begin{prop}
For every binary relation $R$ on $\frakM(X) $, $\ocong{R}  = (R^{c})^e.$
\mlabel{prop}
 \end{prop}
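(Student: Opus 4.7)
The plan is to prove the equality $\ocong{R} = (R^c)^e$ by showing mutual inclusion, leaning entirely on the three preceding lemmas (Lemma~\mref{lem:Rc}, Lemma~\mref{lem:Rn}, Lemma~\mref{lem:pro}) together with the minimality definition of $\ocong{R}$.

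For the inclusion $(R^c)^e \subseteq \ocong{R}$, I would argue as follows. Since $\ocong{R}$ is an operated congruence containing $R$, in particular it contains $R$ and satisfies (C1), (C2) and (C3). By Lemma~\mref{lem:Rc}, $R^c$ is the \emph{smallest} binary relation with these properties, so $R^c \subseteq \ocong{R}$. Since $\ocong{R}$ is an equivalence relation and $(R^c)^e$ is by definition the smallest equivalence relation containing $R^c$, it follows that $(R^c)^e \subseteq \ocong{R}$.

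For the reverse inclusion $\ocong{R} \subseteq (R^c)^e$, I would verify that $(R^c)^e$ is itself an operated congruence containing $R$; minimality of $\ocong{R}$ then yields the inclusion. Containment of $R$ is immediate since $R \subseteq R^c \subseteq (R^c)^e$ (choose $q=\star$ in the definition of $R^c$). Being an equivalence is built into the definition of $(\cdot)^e$. The key step is checking (C1), (C2), (C3). Using Lemma~\mref{lem:pro}, I would write
\[
(R^c)^e \;=\; T^\infty \;=\; \bigcup_{n\geq 1} T^n, \qquad T := R^c \cup (R^c)^{-1} \cup 1_{\frakM(X)}.
\]
By Lemma~\mref{lem:Rc}, $R^c$ satisfies (C1)--(C3); a direct check (or invoking Lemma~\mref{lem:Rcp}(\mref{it:brm2}) to identify $(R^c)^{-1}$ with $(R^{-1})^c$) shows that $(R^c)^{-1}$ does as well, because the three conditions are symmetric in the two coordinates; and $1_{\frakM(X)}$ trivially satisfies them. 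Since the union of relations satisfying (C1)--(C3) clearly satisfies (C1)--(C3), $T$ does. Lemma~\mref{lem:Rn} then upgrades this to each iterate $T^n$, and taking the union $\bigcup_{n\geq 1} T^n$ preserves (C1)--(C3) by the same union argument.

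The main (though mild) obstacle is the bookkeeping in the last paragraph: one has to confirm carefully that \textbf{each} of the closure operations used to build $(R^c)^e$ out of $R^c$, namely taking inverses, adjoining the diagonal, taking finite unions, composing, and finally taking the ascending union, preserves the properties (C1), (C2), (C3). None of these checks is deep, but together they are what makes the argument go through, and everything else is a straightforward application of minimality.
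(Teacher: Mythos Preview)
Your proof is correct and follows essentially the same approach as the paper's. The only cosmetic difference is that where you verify (C1)--(C3) for $T = R^c \cup (R^c)^{-1} \cup 1_{\frakM(X)}$ by checking each summand separately and invoking closure under unions, the paper uses Lemma~\mref{lem:Rcp} to rewrite $T$ in one stroke as $(R \cup R^{-1} \cup 1_{\frakM(X)})^c$ and then appeals to Lemma~\mref{lem:Rc}; both routes amount to the same verification.
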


 \begin{proof}
By Lemma~\mref{lem:equ}, $(R^{c})^e$ is an equivalence containing $R^{c}$, and so certainly containing $R$ by Lemma~\mref{lem:Rc}.
Next we show that $(R^{c})^e$ satisfies (C1),(C2) and (C3).
From Lemma~\mref{lem:equ},
$$(R^{c})^e = S^\infty = \cup_{n\geq 1} S^n,\,\text{ where }\, S=R^c\cup (R^c)^{-1}\cup 1_{\frakM(X)}.$$
Using Lemma~\mref{lem:Rcp} and the fact that $ 1_{\frakM(X)} =  1_{\frakM(X)}^c$,
we get
$$S = R^c\cup (R^c)^{-1}\cup 1_{\frakM(X)} = R^c\cup (R^{-1})^{c}\cup 1_{\frakM(X)}^c=(R\cup R^{-1}\cup 1_{\frakM(X)})^c,$$
which implies from Lemma~\mref{lem:Rc} that
$S$ satisfies (C1), (C2) and (C3), and so is $S^n$ for all $n\geq 1$ by Lemma~\mref{lem:Rn}.
Hence
$$(R^c)^e = S^\infty = \cup_{n\geq 1} S^n $$ also satisfies (C1), (C2) and (C3).
Moreover since $(R^c)^e$ is already an equivalence, it is an operated congruence by Definition~\mref{defn:ocong}.

Finally, suppose $T$ is an arbitrary operated congruence on $\frakM(X)$ containing $R$.
Then $T^c=T$ by Definition~\mref{defn:ocong} and Eq.~(\mref{eq:rcc}). So from Lemma~\mref{lem:Rcp},
$$R^c\subseteq T^c=T\,\text{ and }\, (R^c)^e \subseteq T^e = T.$$
This completes the proof.
 \end{proof}



\begin{prop}
Let $R$ be a binary relation on $\frakM (X)$.
Then $(a,b)\in \ocong{R} $ if and only if either $a=b$ or,  for some $n$ in $\bf N$, there is a sequence of elements $a=v_1, v_2, \cdots,  v_n=b$,  in which, for each $i$ in $\{1,2,\cdots,n-1\}$, either $(v_i,v_{i+1})\in R^c$ or $(v_{i+1},v_i)\in R^c.$
\mlabel{pres}
\end{prop}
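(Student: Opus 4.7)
The plan is to combine the two main ingredients already established in this section: Proposition~\mref{prop}, which identifies the operated congruence $\ocong{R}$ with the equivalence closure $(R^c)^e$, and Lemma~\mref{lem:equ}, which gives the ``chain of elementary steps'' description of the equivalence relation generated by an arbitrary binary relation on a set.

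First, I would invoke Proposition~\mref{prop} to rewrite the statement ``$(a,b)\in\ocong{R}$'' as ``$(a,b)\in (R^c)^e$''. At that point the claim no longer mentions operated structure at all; it is simply a statement about the equivalence relation generated by the binary relation $R^c$ on the underlying set $\frakM(X)$. Then I would apply Lemma~\mref{lem:equ} directly to the binary relation $S := R^c$ on the set $\frakM(X)$: that lemma says precisely that $(a,b)\in S^e$ if and only if $a=b$ or there exists $n\in\mathbf{N}$ and a sequence $a=v_1,v_2,\ldots,v_n=b$ such that for each $i\in\{1,\ldots,n-1\}$ either $(v_i,v_{i+1})\in S$ or $(v_{i+1},v_i)\in S$. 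Substituting $S=R^c$ yields the desired characterization.

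Since both halves of the argument are quotations of preceding results, there is essentially no obstacle beyond noting that Proposition~\mref{prop} and Lemma~\mref{lem:equ} compose in the obvious way. The only mildly subtle point worth mentioning explicitly in the write-up is that Lemma~\mref{lem:equ} is applied to $\frakM(X)$ viewed merely as a set (so that its hypotheses are trivially satisfied), with the binary relation $R^c$ playing the role of the generic $R$ in the statement of that lemma. In consequence, the proof will be only a few lines long and will consist entirely of citing the two earlier results and making the identification $S=R^c$ explicit.
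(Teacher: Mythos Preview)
Your proposal is correct and matches the paper's own proof exactly: the paper simply writes ``It follows from Lemmas~\mref{lem:pro} and~\mref{prop},'' which is precisely the composition of Lemma~\mref{lem:equ} (applied to $R^c$) with Proposition~\mref{prop} that you describe.
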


 \begin{proof}
It follows from Lemmas~\mref{lem:pro} and~\mref{prop}.
 \end{proof}

Every operated congruence $R$ on $\frakM(X)$ has a corresponding quotient structure $\frakM(X)/R$, whose elements are operated congruence classes for the relation. We end this subsection with an important concepts used later.

\begin{defn}
Let $X$ be a set and $R$ an operated congruence on $\frakM(X)$. We call $W\subseteq \frakM(X)$ a {\bf section} of $R$ if, for each operated congruence class $A$ of $\frakM(X)/R$, there exists exactly one element $w\in W$ such that $w\in A$.
\end{defn}

\subsection{Relationship between rewriting systems and sections}
In this subsection, we first associate a rewriting system $\Pi_S$ to a binary relation $S$ on $\frakM(X)$.
A monomial order compatible with all operations is needed.

\begin{defn}
Let $X$ be a set.
A {\bf  monomial order on $\frakM(X)$} is a
well-ordering $\leq$ on $\frakM(X)$ such that
\begin{equation}
u < v \Longrightarrow uw< vw,\, wu<wv,\,  \lc u\rc < \lc v\rc \  \text{for all } u, v, w \in \frakM(X).
%
\label{eq:morder}
\end{equation}
We denote $u < v$ if $u\leq v$ but $u\neq v$.
\label{de:morder}
\end{defn}

The following concepts are adapted from~\cite{BN, GGSZ}.



\begin{defn}
Let $X$ be a set and $\leq$ a monomial order on $\frakM(X)$.
Let $S$ be a binary relation on $\frakM (X)$.
\begin{enumerate}
\item {\bf A term-rewriting system} $\Pi_S$ on $\frakM(X)$ associated to $S$ is a binary relation of $\frakM (X)$, denote by
\begin{equation}
\Pi_S:=\{( q|_{t} , q|_{v}) \mid  q\in \frakM^{\star} (X), (t,v)\in S\cup S^{-1}, t>v \}.
\mlabel{eq:piss}
\end{equation}
An element in $ \Pi_S$ is called a {\bf rewriting rule }.

\item Let $f,g\in\frakM (X)$, we call {\bf $f$ rewrites to $g$ {\bf in one-step with respect to $\Pi_S$}}, if $(f,g)\in \Pi_S$.
We indicate any such one-step rewriting by $f\topi g$.

\item The reflexive transitive closure of $\Pi_S$ (as a binary relation on $\frakM(X)$) will be denoted by $\astarrowpi$ and we say $f$ {\bf rewrites to $g$ with respect to $\Pi_S$} if $f\astarrowpi g$.
In this case, we call $f$ is a {\bf predecessor} of $g$. Denote by $\pre(g)$ the set of all predecessors of $g$. Note that $g\in \pre(g)$.

\item Two elements  $f, g \in \frakM (X)$ are {\bf  joinable} if there exists $h \in \frakM (X)$ such that $f \astarrowpi h$ and $g \astarrowpi h$. Denote it by $f\downarrow_{\Pi_S}g$.

\item The image $\pi_1(\Pi_S)$ under the first projection map will be denoted by $\mathrm{Dom}(\Pi_S)$.
An element $f\in\frakM(X)$ is {\bf irreducible} or {\bf in normal form} if $ f \notin \mathrm{Dom}(\Pi_S)$, that is,
no more rewriting rule from $\Pi_S$ can apply to $f$.

\end{enumerate}
\mlabel{defn:rsts}
\end{defn}

\begin{defn}
The term-rewriting system $\Pi_S$ defined above is called
 \begin{enumerate}
 \item {\bf  terminating} if there is no infinite chain of one-setp rewriting
 \vspace{-3pt}$$f_0 \topi f_1 \topi f_2 \cdots;\vspace{-3pt}$$
\item {\bf  confluent}  if every fork $(f\astarrowpi h,  f\astarrowpi g)$, we have $g\downarrow_{\Pi_S} h$;
\item {\bf locally confluent} if for every local fork $(f\topi h,  f\topi g)$, we have $g\downarrow_{\Pi_S} h$;
\item {\bf  convergent} if it is both terminating and confluent.
\end{enumerate}
\label{def:ARS}
\end{defn}

A well-known result on rewriting systems is Newman's Lemma~\cite[Lemma 2.7.2]{BN}.

\begin{lemma}{\rm (Newman)}
 A terminating rewriting system is confluent if and only if it is locally confluent. \mlabel{lem:newman}
 \end{lemma}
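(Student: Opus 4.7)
The plan is to prove the two directions separately, with the forward (``confluent $\Rightarrow$ locally confluent'') direction being essentially trivial, since every local fork $(f\topi g, f\topi h)$ is in particular a fork $(f\astarrowpi g, f\astarrowpi h)$, so confluence immediately supplies a common successor. All of the work is in the reverse direction.

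For the reverse direction, assume $\Pi_S$ is terminating and locally confluent. The termination hypothesis means that the relation $\topi$ is well-founded when read backwards: there is no infinite chain $f_0 \topi f_1 \topi f_2 \topi \cdots$. I would therefore set up Noetherian (well-founded) induction on elements of $\frakM(X)$, where the induction principle reads: to prove a property $P(f)$ for all $f$, it suffices to show $P(f)$ under the assumption that $P(f')$ holds for every $f'$ with $f \topi f'$. The property I want to establish is the confluence property at $f$, namely that whenever $f \astarrowpi g$ and $f \astarrowpi h$, one has $g \downarrow_{\Pi_S} h$.

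Fix $f$ and assume the inductive hypothesis holds for every $f'$ with $f\topi f'$. Given a fork $f \astarrowpi g$ and $f \astarrowpi h$, if either reduction is empty (i.e.\ $f=g$ or $f=h$) the conclusion is immediate. Otherwise decompose each reduction into a first step followed by the rest: $f \topi f_1 \astarrowpi g$ and $f \topi f_2 \astarrowpi h$. Local confluence applied to the local fork $(f\topi f_1, f\topi f_2)$ yields $w\in \frakM(X)$ with $f_1 \astarrowpi w$ and $f_2 \astarrowpi w$. Now apply the inductive hypothesis at $f_1$ to the fork $(f_1\astarrowpi g, f_1\astarrowpi w)$, producing $g'$ with $g\astarrowpi g'$ and $w\astarrowpi g'$. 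Then $f_2 \astarrowpi w \astarrowpi g'$ and also $f_2 \astarrowpi h$, so the inductive hypothesis at $f_2$ gives $h'$ with $g' \astarrowpi h'$ and $h \astarrowpi h'$. Chaining, $g \astarrowpi g' \astarrowpi h'$ and $h \astarrowpi h'$, so $g \downarrow_{\Pi_S} h$, as required.

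The only real subtlety, and what I would flag as the main obstacle, is justifying the Noetherian induction itself: one must verify that termination of $\Pi_S$ in the sense of Definition~\ref{def:ARS} genuinely implies that $\topi$ is a well-founded relation, so that induction on ``being reachable in one step'' is legitimate. This is standard (the absence of an infinite descending chain is equivalent, via dependent choice, to the well-foundedness needed for Noetherian induction), and I would simply cite it. The rest of the argument is a careful bookkeeping of the four applications of rewriting chains, and I would include a small diagram in the write-up showing the six points $f, f_1, f_2, g, h, w, g', h'$ and the reductions between them to make the confluence closure visually transparent.
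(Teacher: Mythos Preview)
Your proof is correct and is precisely the standard Noetherian-induction argument for Newman's Lemma. Note, however, that the paper does not supply its own proof of this lemma at all: it simply quotes the result from \cite[Lemma~2.7.2]{BN} and moves on, so there is nothing in the paper to compare your argument against. Your write-up would serve perfectly well as a self-contained proof if one were desired.
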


\begin{lemma}
Let $X$ be a set and $\leq$ a monomial order on $\frakM(X)$.
Let $S$ be a binary relation on $\frakM (X)$.
The term-rewriting system $\Pi_S$ defined in Eq.~(\mref{eq:piss})  is terminating.
\mlabel{lem:termi}
\end{lemma}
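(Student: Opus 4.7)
The plan is to reduce termination to well-foundedness of the monomial order. Specifically, I aim to show that every one-step rewriting $f \topi g$ satisfies $f > g$ in $\leq$; since $\leq$ is a well-ordering on $\frakM(X)$, no infinite descending chain can exist, so $\Pi_S$ must terminate.

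The key step will be to establish that substitution into a $\star$-bracketed word is compatible with the monomial order: for all $t, v \in \frakM(X)$ with $t > v$ and all $q \in \frakM^{\star}(X)$, one has $q|_t > q|_v$. I would prove this by induction on the $\star$-depth of $q$, meaning the number of pairs of brackets enclosing the unique occurrence of $\star$ in $q$.

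For the base case, when $\star$ lies outside every bracket in $q$, the unique factorization of $q$ in $\frakM(X^{\star})$ takes the form $q = x_1 \cdots x_{j-1}\, \star\, x_{j+1} \cdots x_n$ with each $x_i \in X \cup \lc \frakM(X)\rc$. Starting from $t > v$, I iteratively apply the left-multiplication compatibility of Eq.~(\mref{eq:morder}) with $x_{j-1}, \ldots, x_1$, then the right-multiplication compatibility with $x_{j+1}, \ldots, x_n$, to conclude $q|_t > q|_v$. For the inductive step, when $\star$ appears inside some outermost bracketed factor $x_j = \lc q' \rc$ of $q$, the $\star$-depth of $q'$ is strictly smaller than that of $q$, so the induction hypothesis gives $q'|_t > q'|_v$, hence $\lc q'|_t \rc > \lc q'|_v \rc$ by the bracketing compatibility, and the same multiplication argument as in the base case yields $q|_t > q|_v$.

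Once this compatibility is in hand, termination follows at once: if $f \topi g$, then by Eq.~(\mref{eq:piss}) there exist $q \in \frakM^{\star}(X)$ and $(t,v) \in S \cup S^{-1}$ with $t > v$ such that $f = q|_t$ and $g = q|_v$, whence $f > g$ by the compatibility just proved. An infinite rewriting chain $f_0 \topi f_1 \topi f_2 \cdots$ would therefore produce an infinite strictly descending sequence in $(\frakM(X), \leq)$, contradicting well-foundedness. I do not anticipate any serious obstacle; the only step requiring care is setting up the structural induction on $q$, which is routine given the recursive construction of $\frakM(X)$ reviewed earlier.
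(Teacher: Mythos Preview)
Your proposal is correct and follows essentially the same approach as the paper: show that each one-step rewriting strictly decreases the monomial order and then invoke well-foundedness. The only difference is that the paper asserts $q|_t > q|_v$ directly from the phrase ``since $\leq$ is a monomial order,'' whereas you spell out the inductive justification on the $\star$-depth of $q$; your version is more careful but not a different argument.
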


\begin{proof}
Suppose to the contrary that $\Pi_S$ is not terminating. Then there is an infinite chain of one-step rewritings
 \vspace{-3pt}$$f_0 \topi f_1 \topi f_2 \cdots.\vspace{-3pt}$$
Let $f \topi g$ be a one-step rewriting. Then $(f,g)\in \Pi_S$ and so we can write $(f,g)=(q|_t, q|_v)$ with $q\in \frakM^{\star}(X)$, $(t,v)\in S\cup S^{-1}$
and $t>v$. Since $\leq$ is a monomial order, we have $f = q|_t > q|_v =g$. Hence
$$f_0 > f_1 > f_2 \cdots,$$
contradicting that $\leq$ is a well-order.
\end{proof}

We are going to capture the relationship between convergent term-rewriting systems on $\frakM(X)$ and
sections of quotients of $\frakM(X)$. Let us record three lemmas as a preparation.

\begin{lemma}
Let $X$ be a set and $\leq$ a monomial order on $\frakM(X)$.
Let $S$ be a binary relation on $\frakM (X)$.
If $a\astarrowpi b$ with $a,b\in \frakM(X)$, then $(a,b)\in \ocong{S}$.
\mlabel{lem:congr}
\end{lemma}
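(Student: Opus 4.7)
The plan is to unwind the definition of $\astarrowpi$ and match it, step by step, against the characterization of $\ocong{S}$ given in Proposition~\mref{pres}.

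First I would dispose of the trivial case $a=b$, where $(a,b)\in\ocong{S}$ by reflexivity of the equivalence relation $\ocong{S}$. Otherwise, since $\astarrowpi$ is the reflexive transitive closure of $\Pi_S$, there exists a finite chain
\[
a=f_0\topi f_1\topi f_2\topi\cdots\topi f_n=b
\]
of one-step rewritings with $n\geq 1$. By Proposition~\mref{pres}, to conclude $(a,b)\in\ocong{S}$ it suffices to show that for each $i\in\{0,1,\dots,n-1\}$, either $(f_i,f_{i+1})\in S^c$ or $(f_{i+1},f_i)\in S^c$.

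To verify this, I would unpack Eq.~(\mref{eq:piss}): from $f_i\topi f_{i+1}$ we obtain $q\in\frakM^{\star}(X)$ and $(t,v)\in S\cup S^{-1}$ with $t>v$ such that $(f_i,f_{i+1})=(q|_{t},q|_{v})$. If $(t,v)\in S$, then by the definition of $S^c$ in Eq.~(\mref{eq:rcc}) we immediately get $(f_i,f_{i+1})=(q|_{t},q|_{v})\in S^c$. If instead $(t,v)\in S^{-1}$, then $(v,t)\in S$, so again by Eq.~(\mref{eq:rcc}) we have $(q|_{v},q|_{t})\in S^c$, i.e.\ $(f_{i+1},f_i)\in S^c$. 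Either way, the needed alternative holds.

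Combining these steps, the chain $f_0,f_1,\dots,f_n$ is exactly of the form required by Proposition~\mref{pres}, whence $(a,b)\in\ocong{S}$. There is no real obstacle here; the argument is a direct translation between the rewriting-system language and the characterization of the generated operated congruence. The only thing to keep track of carefully is the symmetric treatment of the two cases $(t,v)\in S$ versus $(t,v)\in S^{-1}$, which is precisely what forces the appearance of $S^c\cup (S^c)^{-1}$ in Proposition~\mref{pres}.
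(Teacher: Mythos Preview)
Your proof is correct and follows essentially the same route as the paper. The only cosmetic difference is that the paper compresses your case analysis into the single chain of inclusions $\Pi_S \subseteq (S\cup S^{-1})^c = S^c \cup (S^c)^{-1} \subseteq (S^c)^e = \ocong{S}$ (using Lemma~\mref{lem:Rcp} and Proposition~\mref{prop}) and then appeals to transitivity, whereas you invoke Proposition~\mref{pres} directly and split the cases $(t,v)\in S$ versus $(t,v)\in S^{-1}$ by hand; the content is identical.
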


\begin{proof}
If $a =b$, then  $(a,b)\in \langle S\rangle$.
Suppose $a\neq b$. Then there are $a_1, a_2, \cdots, a_n\in \frakM(X)$ such that
$$a= a_1\topi a_2\topi \cdots \topi a_n = b,$$
which implies that
$$ (a_1,a_2), \cdots, (a_{n-1},a_n)\in \Pi_S \subseteq (S\cup S^{-1})^c=S^c\cup (S^c)^{-1}\subseteq (S^c)^e = \ocong{S}.$$
From the transitivity, we get $(a,b) = (a_1, a_n)\in \ocong{S}$, as required.
\end{proof}

%


\begin{lemma}
Let $X$ be a set and $\leq$ a monomial order on $\frakM(X)$.
Let $S$ be a binary relation on $\frakM (X)$.
For each operated congruence class $A$ of $\langle S \rangle$, we have
$$|A\cap \irr(S)|\geq 1\,\text{ and }\, A = \cup_{a\in A\cap \irr(S)} \pre(a),$$
where $\irr(S):=\frakM (X)\setminus  \mathrm{Dom}(\Pi_S)$ is the set of all irreducible elements under $\Pi_S$.
\mlabel{lem:classA}
\end{lemma}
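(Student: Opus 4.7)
The plan is to prove the two assertions in order, using the termination of $\Pi_S$ (already established) and the fact that rewriting is contained in the operated congruence $\langle S\rangle$.

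First, for $|A\cap \irr(S)|\geq 1$, I would pick any $w\in A$ and repeatedly apply one-step rewrites. By the termination lemma, $\Pi_S$ is terminating, so any maximal chain $w = w_0 \topi w_1 \topi \cdots$ must stop after finitely many steps at some $a\in \frakM(X)$ with $a\notin \mathrm{Dom}(\Pi_S)$, i.e.\ $a\in \irr(S)$. In particular $w \astarrowpi a$, so by Lemma~\mref{lem:congr} we have $(w,a)\in \langle S\rangle$. Since $w\in A$ and $A$ is an operated congruence class of $\langle S\rangle$, this forces $a\in A$, and hence $a\in A\cap \irr(S)$.

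Second, for the equality $A = \bigcup_{a\in A\cap \irr(S)} \pre(a)$, I would verify both inclusions separately. For $(\supseteq)$, take $w\in \pre(a)$ for some $a\in A\cap \irr(S)$. Then $w \astarrowpi a$, so Lemma~\mref{lem:congr} gives $(w,a)\in \langle S\rangle$; since $a\in A$, this means $w$ and $a$ lie in the same operated congruence class, so $w\in A$. For $(\subseteq)$, take $w\in A$. By the argument in the previous paragraph, there exists $a\in A\cap \irr(S)$ with $w \astarrowpi a$, so $w\in \pre(a)$, and therefore $w$ lies in the right-hand union.

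I do not expect any real obstacles here: both parts are direct consequences of termination plus the fact $\astarrowpi{}\subseteq \langle S\rangle$ from Lemma~\mref{lem:congr}. The only mild care needed is the distinction between $\topi$ and its reflexive transitive closure $\astarrowpi$ when invoking Lemma~\mref{lem:congr}, and the observation that $w\in \pre(w)$ covers the trivial case $w=a$ when $w$ itself is already irreducible.
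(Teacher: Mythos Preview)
Your proposal is correct and follows essentially the same approach as the paper: use termination (Lemma~\mref{lem:termi}) to rewrite an arbitrary element of $A$ to an irreducible one, then use Lemma~\mref{lem:congr} to see that this irreducible element stays in $A$, and handle the two inclusions for $A = \bigcup_{a\in A\cap \irr(S)} \pre(a)$ exactly as you do. The only cosmetic difference is that the paper proves $(\subseteq)$ before $(\supseteq)$.
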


\begin{proof}
Let $b\in A$. By Lemma~\mref{lem:termi}, there is $a\in \irr(S)$ such that $b\astarrowpi a$.
From Lemma~\mref{lem:congr}, we have $(b,a)\in \ocong{S}$ and so $a\in A$. Hence $a\in A\cap \irr(S)$
and $|A\cap \irr(S)|\geq 1$. Since  $b\astarrowpi a$, it follows that $b\in \pre(a)$ and thus
$$A\subseteq \cup_{a\in A\cap \irr(S)} \pre(a).$$
Conversely, for every element $a\in A$, we have $\pre(a)\subseteq A$ by Lemma~\mref{lem:congr} and so
$$\cup_{a\in A\cap \irr(S)} \pre(a)\subseteq A.$$
This completes the proof.
\end{proof}

\begin{lemma}
Let $X$ be a set and $\leq$ a monomial order on $\frakM(X)$.
Let $S$ be a binary relation on $\frakM(X)$.
For each operated congruence class $A$ of $\langle S \rangle$,  if $\Pi_S$ is confluent, then $|A\cap \irr(S)|=1$.
\mlabel{lem:confc}
 \end{lemma}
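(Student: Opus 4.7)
My plan is to fix an arbitrary class $A$, take two elements $a,b \in A \cap \irr(S)$, and show $a = b$, which combined with Lemma~\ref{lem:classA} yields $|A \cap \irr(S)| = 1$.

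First I would unpack membership in the same class using Proposition~\ref{pres}: there is a finite sequence $a = v_1, v_2, \ldots, v_n = b$ in $\frakM(X)$ such that for each $i$, either $(v_i,v_{i+1}) \in S^c$ or $(v_{i+1},v_i) \in S^c$. For each such step, say $(v_i,v_{i+1}) = (q|_s, q|_t)$ with $(s,t) \in S$ and $q \in \frakM^{\star}(X)$, the well-order $\leq$ forces exactly one of $s>t$, $s=t$, $s<t$. In the first case $v_i \topi v_{i+1}$ via the rule associated to $(s,t) \in S$ with $s>t$; in the third case $(t,s) \in S^{-1}$ with $t>s$ gives $v_{i+1} \topi v_i$; in the middle case $v_i = v_{i+1}$. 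Hence each consecutive pair lies in the symmetric closure of $\topi$.

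The heart of the argument is then the Church–Rosser style statement: under confluence, any two elements of $\frakM(X)$ connected by such a zigzag are joinable. I would prove this by induction on the zigzag length $n$. The base $n=1$ is trivial. For the inductive step, assume $v_1$ and $v_{n-1}$ are joinable, say $v_1 \astarrowpi w$ and $v_{n-1} \astarrowpi w$. The relation between $v_{n-1}$ and $v_n$ is one of three cases: if $v_{n-1} = v_n$, we are done; if $v_{n-1} \topi v_n$, then from the fork $v_{n-1} \astarrowpi w$ and $v_{n-1} \astarrowpi v_n$ confluence produces $w'$ with $w \astarrowpi w'$ and $v_n \astarrowpi w'$, giving $v_1 \astarrowpi w'$ and $v_n \astarrowpi w'$; if $v_n \topi v_{n-1}$, then $v_n \astarrowpi v_{n-1} \astarrowpi w$ and we are joinable at $w$.

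Applying this to our zigzag, there exists $c \in \frakM(X)$ with $a \astarrowpi c$ and $b \astarrowpi c$. Because $a,b \in \irr(S)$ and $\astarrowpi$ is the reflexive–transitive closure of $\topi$, no rewriting can apply to $a$ or $b$, so $a = c = b$. The main obstacle I expect is purely bookkeeping in the inductive step—correctly handling the three subcases (equality, forward rewrite, backward rewrite) at the last link of the zigzag while invoking confluence of the fork at $v_{n-1}$—but there is no deeper technical difficulty since termination (Lemma~\ref{lem:termi}) is already in hand and we are only using confluence in the form given.
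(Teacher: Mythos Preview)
Your proof is correct and follows the standard Church--Rosser route: convert the $S^c$-zigzag from Proposition~\ref{pres} into a $\topi$-zigzag, then show by induction on its length that confluence forces joinability, and finally use irreducibility of $a,b$ to conclude $a=c=b$. All three subcases in the inductive step are handled correctly.

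The paper proceeds differently. It argues by contradiction: assuming $|A\cap\irr(S)|\geq 2$, it splits on whether the predecessor sets $\pre(a_i)$ of the distinct irreducibles intersect. If they do, the common element forks to two distinct normal forms, contradicting confluence directly. If they are all pairwise disjoint, the paper walks along the $S^c$-sequence from $a_i$ to $a_j$, takes the last index $\ell$ with $v_\ell\in\pre(a_i)$, and shows that $v_\ell$ must one-step rewrite to $v_{\ell+1}$ (the other direction would put $v_{\ell+1}$ back in $\pre(a_i)$); then $v_\ell$ forks to $a_i$ and to some other $a_p$, again contradicting confluence. This argument leans on the decomposition $A=\bigcup_{a\in A\cap\irr(S)}\pre(a)$ from Lemma~\ref{lem:classA}, whereas you only need the lower bound $|A\cap\irr(S)|\geq 1$ from that lemma. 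Your approach is shorter and is essentially the textbook ``confluence $\Rightarrow$ Church--Rosser'' argument; the paper's approach avoids that abstraction but pays with a more intricate case analysis on predecessor sets.
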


 \begin{proof}
Assume that $\Pi_S$ is confluent.
By Lemma~\mref{lem:classA}, we get
$ |A\cap \irr(S)|\geq 1.$
Suppose to the contrary that $|A\cap \irr(S)|\geq 2$. Let $A\cap \irr(S) = \{a_i\mid i\in I\}$ with $|I|\geq 2$. We have two cases to consider.

\noindent{\bf Case 1.} $\pre(a_i)\cap \pre(a_j)\neq \emptyset$  for some $i,j\in I$ with $i\neq j$. In this case, we can choose $b\in \pre(a_i)\cap \pre(a_j)$. Then $(b\astarrowpi a_i, b\astarrowpi a_j)$ is a fork, which is not joinable by $a_i, a_j\in \irr(S)$, contradicting that $\Pi_S$ is confluent.

\noindent{\bf Case 2.} $\pre(a_i) \cap \pre(a_j)= \emptyset$ for all $i,j\in I$ with $i\neq j$. We claim that
$(t_i, t_j), (t_j, t_i)\notin \Pi_S$ for all $t_i\in \pre(a_i)$ and $t_j\in \pre(a_j)$. Otherwise, by symmetry,
let $(t_i, t_j)\in \Pi_S$ for some $t_i\in \pre(a_i)$ and $t_j\in \pre(a_j)$. Then $t_i \topi t_j\astarrowpi a_j$
and so $t_i\in \pre(a_j) \cap \pre(a_i)$, contradicting that $\pre(a_i)\cap \pre(a_j) = \emptyset$.

Since $a_i$ and $a_j$ are in the same operated congruence class $A$, we have $(a_i,a_j)\in \langle S \rangle$.
By Lemma~\mref{pres}, there is a sequence $a_i=v_1, v_2, \cdots,  v_n=a_j$ with $n\geq 2$,  in which, for each $k$ in $\{1,2,\cdots,n-1\}$, either $(v_k,v_{k+1})\in S^c $ or $(v_{k+1},v_k)\in S^c.$ Note that $v_k\in A, 1\leq k \leq n$.
Because $a_i\in \irr(S)$, we can take $\ell:=\max\{ k \mid v_k\in \pre(a_i), 1\leq k\leq n\}$.
If $\ell = n$, then $a_j = v_n \in \calp(a_i)$ and so $a_j\in \calp(a_i)\cap \calp(a_j)$, a contradiction.
If $1\leq \ell\leq n-1$, then
$$v_\ell\in \pre(a_i), v_{\ell+1}\notin \pre(a_i)\,\text{ and }\, v_\ell \topi v_{\ell+1}.$$
Since $v_{\ell+1}\in A$, by Lemma~\mref{lem:classA}, there exists $p\in I$ with $p\neq i$ such that $v_{\ell+1} \astarrowpi a_p.$
Thus $v_\ell \astarrowpi a_i$ and $v_\ell \topi v_{\ell+1} \astarrowpi a_p$,
but $a_i, a_p\in \irr(S)$ and $a_i\neq a_p$, contradicting that $\Pi_S$ is confluent.
\end{proof}

Now we are ready for our main result of this section.

 \begin{theorem}
Let $S$ be a binary relation on $\frakM (X)$, and let $\leq$ be a monomial order on $\frakM(X)$ and $\Pi_S$ the term-rewriting system with respect to $\leq$.
Then $\irr(S)$ is a section of  $\langle S \rangle$   if and only if  $\Pi_S$   is convergent.
 \mlabel{thm:sect}
 \end{theorem}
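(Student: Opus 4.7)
The plan is to prove the biconditional by handling the two implications separately, using the preparatory lemmas as the main engine so that the proof itself becomes quite short.

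For the ``if'' direction, I assume $\Pi_S$ is convergent. By Lemma~\mref{lem:classA} each operated congruence class $A$ of $\langle S\rangle$ satisfies $|A\cap\irr(S)|\ge 1$, and confluence lets me invoke Lemma~\mref{lem:confc} to upgrade this to $|A\cap\irr(S)|=1$. That is exactly the definition of a section, so $\irr(S)$ is a section of $\langle S\rangle$.

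For the ``only if'' direction, I assume $\irr(S)$ is a section and must show $\Pi_S$ is convergent. Termination is automatic by Lemma~\mref{lem:termi}, so only confluence needs attention. Given any fork $g\,\lastarrow_{\Pi_S} f \astarrowpi h$, termination guarantees normal forms $g',h'\in\irr(S)$ with $g\astarrowpi g'$ and $h\astarrowpi h'$. Applying Lemma~\mref{lem:congr} along the four rewriting chains $f\astarrowpi g\astarrowpi g'$ and $f\astarrowpi h\astarrowpi h'$ puts the pairs $(f,g'),(f,h')$ into $\langle S\rangle$, hence $(g',h')\in\langle S\rangle$ by transitivity. Thus $g'$ and $h'$ lie in a common operated congruence class $A$; but $g',h'\in A\cap\irr(S)$ and the section hypothesis forces $|A\cap\irr(S)|=1$, so $g'=h'$. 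This shows $g\downarrow_{\Pi_S}h$, proving confluence.

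Since both steps rely entirely on machinery already assembled (Lemmas~\mref{lem:termi}, \mref{lem:congr}, \mref{lem:classA}, \mref{lem:confc}), I do not expect a genuine obstacle; the only thing to be careful about is to make sure that the section property is applied to the normal forms $g',h'$ (not to $g,h$ themselves, which may not be irreducible) and to cite Lemma~\mref{lem:congr} rather than attempt a direct induction on rewriting steps. Newman's Lemma is available but I will not need it, since termination already supplies normal forms and the section hypothesis directly identifies them.
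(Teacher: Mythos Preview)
Your proposal is correct and follows essentially the same approach as the paper's proof: the ``if'' direction is identical (invoke Lemma~\mref{lem:confc}), and for the ``only if'' direction you argue directly where the paper argues by contraposition, but the core mechanism---reduce the two branches of a fork to irreducibles via Lemma~\mref{lem:termi}, place them in the same $\langle S\rangle$-class via Lemma~\mref{lem:congr}, and use the section hypothesis to force equality---is the same in both.
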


 \begin{proof}
($\Leftarrow$)
Suppose $\Pi_S$ is convergent. Then in view of Lemma~\mref{lem:confc}, every operated congruence class intersects with $\irr(S)$ exactly one element. So $\irr(S)$ is a section of $\frakM (X){/}\langle S \rangle$ .

($\Rightarrow$)  Suppose to the contrary that $\Pi_S$ is not convergent.
Since $\Pi_S$ is terminating by Lemma~\mref{lem:termi}, $\Pi_S$ is not confluent.
 Then there exists a fork $(t\astarrowpi v_1, t\astarrowpi v_2)$ which is not joinable.
By Lemma~\mref{lem:termi},
$$v_1 \astarrowpi u_1\,\text{ and }\, v_2\astarrowpi u_2\,\text{ for some }\, u_1, u_2\in \irr(S), u_1\neq u_2.$$
which implies that $t\astarrowpi u_1$ and $t\astarrowpi u_2$.
By Lemma~\mref{lem:congr}, we have
$$ (t,  u_1),  (t, u_2) \in \langle S\rangle \,\text{ and so }  (u_1, u_2) \in \langle S\rangle.$$
Hence $u_1$ and $u_2$ in a same operated congruence class and they are in normal form, contradicting that $\irr(S)$ is a section of $\frakM (X){/}\langle S \rangle$.
\end{proof}

\section{Applications}
\mlabel{ss:sai}
Inverse monoids appear in a range of contexts, for example, they can be employed in the study of partial symmetries~\mcite{Law}.
$U$-monoids are natural generalizations of inverse monoids.

\begin{defn}
A $U$-monoid is a monoid $G$ equipped with a (unary) operator $^\circ$ such that $(u^\circ)^\circ = u$ for all $u\in G$.
\end{defn}

The following are two classes of $U$-monoids.

\begin{defn}
\begin{enumerate}
\item A $\ast$-monoid is a $U$-monoid $G$ satisfying the axiom $(uv)^\circ = v^\circ u^\circ$ for all $u,v\in G$. Such an operator is called an involution, and typically denoted by $\ast$.

\item A group is a $\ast$-monoid $G$ satisfying the axiom $u^\ast u =1=uu^\ast$ for all $u\in G$.
\end{enumerate}
\mlabel{defn:I*}
\end{defn}

In this section, as applications of Theorem~\mref{thm:sect}, we construct respectively
sections of free $\ast$-monoids and free groups, which are viewed as quotients of free operated monoids.
The monomial order given in~\cite[Lem.~5.3]{GSZ} will be used throughout this section.


\subsection{Free $\ast$-monoids} This subsection is spent to  construct sections of free $\ast$-monoids.
Recall that $\frakM(X)$ is the free operated monoid on $X$.
As a special case of a well-known result in universal algebra~\cite[Prop.1.3.6]{Coh},
the free $\ast$-monoid  on a set $X$ is the quotient of $\frakM(X)$ by the operated congruence $\langle S\rangle$, where
\begin{equation}
S =\{ \phi(w):= (\lc\lc w \rc\rc, w) ,\,  \psi(u, v):= ( \lc u v\rc,\lc v\rc  \lc u\rc) ,\,\omega :=(\lc 1\rc, 1) \mid w\in \frakM(X),u,v\in \frakM(X)\setminus \{1\} \}.
\mlabel{eq:defns}
\end{equation}

Before we go on to obtain a section of the free $\ast$-monoid, we recall a monomial order on $\frakM(X)$ from~\mcite{GSZ}.
Let $\leq$ be a well-ordering on a set $X$. It can be extended to a well-ordering on $\mapmonoid(X)=\dirlim \mapmonoid_n(X)$ by recursively defining a well-ordering $\leq_n$, on $\mapmonoid_n:=\mapmonoid_n(X)$ for each $n\geq 0$. When $n=0$, we have $\mapmonoid_0=M(X)$. In this case, we obtain a well-ordering by taking the shortlex order $\leq_{\text{sl}}$ on $M(X)$ induced by $\leq$ with the convention that $1 \leq_{\text{sl}}  u$ for all $u\in M(X)\backslash\{1\}$.
Suppose $\leq_n$ has been defined on $\mapmonoid_n:=M(X\sqcup \lc \mapmonoid_{n-1}\rc)$ for an $n\geq 0$. Denote by $\deg_{_{X}}(u)$ the number of $x\in X$ in $u$ with repetition. Then $\leq_n$ induces
\begin{enumerate}
\item
a well-ordering $\leq'_n$ on $\lc\mapmonoid_n\rc$ by
\begin{equation*}
\lc u\rc <'_n \lc v\rc \Longleftrightarrow u <_n v;
\end{equation*}
\item a well-ordering $\leq''_n$ on $X\sqcup \lc \mapmonoid_n\rc$;
\item a well-ordering $\leq'''_n$ on $X\sqcup \lc \mapmonoid_n\rc$ by
\begin{equation*}
u <'''_n v\Longleftrightarrow \left\{ \begin{array}{l} \text{\ either\ } \deg_{_X}(u) < \deg_{_{X}} (v) \\
\text{or\ } \deg_{_X}(u)=\deg_{_X}(v) \text{\ and\ } u <''_n v. \end{array} \right .
\end{equation*}
\item the shortlex well-ordering $\leq_{n+1}$ on $\mapmonoid_{n+1}=M(X\sqcup \lc \mapmonoid_n\rc)$ induced by $\leq_n'''$.
\end{enumerate}
The orders $\leq_n$ are compatible with the direct system $\{\mapmonoid_n\}_{n\geq 0}$ and hence induces a well-ordering, still denoted by $\leq$, on
$\mapmonoid(X)=\dirlim \mapmonoid_n$.

\begin{lemma}\cite[Lem.~5.3]{GSZ}
The order $\leq$ on $\mapmonoid(X)$ defined above is a monomial order.
\end{lemma}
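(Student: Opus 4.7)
The plan is to verify the two ingredients of Definition~\ref{de:morder} separately: first that $\leq$ is a well-ordering on $\mapmonoid(X)$, and then that it is compatible with left/right multiplication and with the bracket $\lc\ \rc$. Both parts proceed by induction on the level $n$ of the direct system, so the bulk of the work is showing that the step from $\leq_n$ to $\leq_{n+1}$ preserves each desired property. My proof would begin by recording that the $n=0$ case is standard: the shortlex order on the free monoid $M(X)$ induced by a well-ordering of $X$ is a well-known monomial order on $M(X)$, and every $u\neq 1$ satisfies $1 \leq_{\text{sl}} u$.

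For the inductive step, assume $\leq_n$ is a well-ordering on $\mapmonoid_n$ that is compatible with multiplication in $\mapmonoid_n$. The order $\leq'_n$ on $\lc \mapmonoid_n\rc$ is a well-ordering because $\lc\ \rc\colon \mapmonoid_n \to \lc \mapmonoid_n\rc$ is a bijection and $\leq'_n$ is defined to transport $\leq_n$. The ordinal sum $\leq''_n$ of two well-orderings on the disjoint union $X \sqcup \lc \mapmonoid_n\rc$ is again a well-ordering; and $\leq'''_n$ is a well-ordering since it is lexicographic in the pair $(\deg_{X}, \leq''_n)$, with each $\deg_X$-fibre being a well-ordered set. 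Finally $\leq_{n+1}$ is the shortlex order on $\mapmonoid_{n+1} = M(X\sqcup \lc \mapmonoid_n\rc)$ over the well-ordered alphabet $(X \sqcup \lc \mapmonoid_n\rc, \leq'''_n)$, hence a well-ordering. One also has to verify that the inclusion $\mapmonoid_n \hookrightarrow \mapmonoid_{n+1}$ is order-preserving, so that the orders glue into a single well-ordering $\leq$ on $\mapmonoid(X) = \dirlim \mapmonoid_n$; this reduces to checking that an element $u\in \mapmonoid_n$, viewed as a word over $X\sqcup \lc \mapmonoid_{n-1}\rc$ and as a word over $X\sqcup \lc \mapmonoid_n\rc$, has the same length and the same first-letter comparisons, which is immediate since $\lc \mapmonoid_{n-1}\rc \subseteq \lc \mapmonoid_n\rc$ and $\leq''_{n-1}$ is the restriction of $\leq''_n$.

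Next I would verify the compatibility axioms $u<v \Rightarrow uw<vw$, $wu<wv$, $\lc u\rc <\lc v\rc$. For any $u,v,w\in \mapmonoid(X)$, pick $n$ large enough that $u,v,w\in \mapmonoid_n$, so the comparison happens in the shortlex order $\leq_n$. Shortlex compatibility with concatenation is standard: lengths add, so $|u|<|v|$ forces $|uw|<|vw|$ and $|wu|<|wv|$; if $|u|=|v|$ then the first differing position of $uw$ and $vw$ (respectively $wu$ and $wv$) lies in the $u,v$-segment and inherits its comparison from $\leq'''_{n-1}$. The bracket axiom $\lc u\rc < \lc v\rc$ holds by definition of $\leq'_n$, recording $u<_n v$. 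One subtlety is that the refinement $\leq'''_{n-1}$ is by $\deg_X$ first, so one must check that $\deg_X$ is additive under concatenation (which is clear since it counts letters from $X$ with multiplicity) and invariant under bracketing at the step $\deg_X(\lc u\rc) = \deg_X(u)$ (so that $u<v$ still implies $\lc u\rc<\lc v\rc$ after the degree refinement).

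The main obstacle I anticipate is bookkeeping across the layered definitions $\leq_n \leadsto \leq'_n \leadsto \leq''_n \leadsto \leq'''_n \leadsto \leq_{n+1}$: one must verify that the $\deg_X$-first refinement at level $n$ is consistent, upon restriction, with the shortlex-with-$\deg_X$ comparison already built into $\leq_n$, so that the direct-limit order is unambiguous. Once additivity of $\deg_X$ and shortlex compatibility are in hand, however, all three monomial-order axioms fall out uniformly, completing the proof.
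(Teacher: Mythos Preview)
The paper does not supply its own proof; the lemma is imported from \cite[Lem.~5.3]{GSZ} and used as a black box. Your inductive outline is the natural way to proceed, and your treatment of multiplicative compatibility via standard shortlex properties is fine.

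Two points need attention. First, your verification of the bracket axiom has a real gap. To deduce $\lc u\rc<\lc v\rc$ from $u<_n v$ you implicitly need $\deg_X(u)\le\deg_X(v)$, since $\leq'''_n$ compares breadth-one letters by $\deg_X$ first and only then by $\leq''_n$. But with the order as literally recapped here (plain shortlex at each stage), this can fail: for $u=xx$ and $v=\lc 1\rc\lc 1\rc\lc 1\rc$ one has $|u|=2<3=|v|$, hence $u<v$, yet $\deg_X(u)=2>0=\deg_X(v)$, whence $\lc v\rc<'''\lc u\rc$. The same example shows that the inequality $w<\lc\lc w\rc\rc$, asserted just after the lemma, would fail for $|w|\ge 2$ under a breadth-first reading. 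The paper's summary of the \cite{GSZ} order is evidently abbreviated (step~(b) is left unspecified, and step~(d) appears to suppress a $\deg_X$-grading at the word level), so a genuine proof has to work from the definition in \cite{GSZ} rather than the sketch reproduced here; your argument cannot be completed from the data given in this paper alone.

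Second, and more minor: order-preserving embeddings $\mapmonoid_n\hookrightarrow\mapmonoid_{n+1}$ give a well-defined \emph{total} order on the union, but not automatically a well-ordering, since a descending chain may pass through infinitely many levels and no single $\leq_n$ witnesses a contradiction. The embeddings here are not initial-segment embeddings (again $\lc 1\rc<x$ while $\lc 1\rc\notin\mapmonoid_0$), so the ``bookkeeping'' you flag, which addresses unambiguity of the limit order, does not by itself yield well-foundedness. That step typically requires a minimal-bad-sequence or recursive-path-order argument exploiting the nested structure.
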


Using this monomial order, we have
$$ w< \lc\lc w \rc\rc ,\, \lc v\rc \lc u \rc<\lc uv \rc, 1<\lc 1\rc\,\text{ for all }\, w\in \frakM(X), \text{ and}\, u,v\in \frakM(X)\setminus \{1\}.$$
For simplicity, if $\alpha$ is an element of a binary relation, we denote $\lbar{\alpha}$ and $\re{\alpha}$ by the domain and image of $\alpha$, respectively.
For example,
\begin{equation*}
\lbar{\phi(w)}= \lc\lc w \rc\rc,\, \re{\phi(w)}= w,\, \lbar{\psi(u,v)}=\lc u v \rc, \, \re{\varphi(u)}=\lc v\rc \lc u \rc,\, \text{ and }
\, \lbar{\omega}= \lc 1\rc,\, \re{\omega}= 1.
\end{equation*}
In the remainder of this paper, if $q|_{\lc\lc x\rc\rc}\topi q|_x$, we will indicate such rewriting step
in more detail by $q|_{\lc\lc x\rc\rc}\to_{\phi} q|_x$.
Similar notations will be used for $\to_{\varphi}$ and $\to_{\omega}$.

The following concept is finer than bracketed subwords, containing the information of placements~\mcite{ZheG}.

\begin{defn}
Let $X$ be a set, and let $w\in \mapm{X}$ be such that
\begin{equation}
\stars{q_1}{u_1}=w=\stars{q_2}{u_2} \ \text{ for some }u_1, u_2\in \mapm{X}, q_1, q_2\in \frakM^\star(X).
\label{eq:plas}
\end{equation}
\noindent
The two \plas $(u_1,q_1)$ and $(u_2,q_2)$ are called
\begin{enumerate}
\item
{\bf separated} if there exist $p \in \frakM^{\star_1,\star_2}(X)$ such that $\stars{q_1}{\star_1}=p|_{\star_1,\,u_2}$, $\stars{q_2}{\star_2}=p|_{u_1,\,\star_2}$, and $w=p|_{u_1,\,u_2}$;
\mlabel{it:bsep}

\item {\bf nested} if there exists $q \in \frakM^{\star}(X)$ such that either $q_2=\stars{q_1}q$ or $q_1=\stars{q_2}q$;
\mlabel{it:bnes}

\item  {\bf intersecting} if there exist $q \in \frakM^{\star}(X)$ and  $a, b, c \in \frakM(X)\backslash\{1\}$ such that $w=q|_{abc}$ and either
\begin{enumerate}
\item $ q_1=q|_{\star c}$ and $q_2=q|_{a\star}$; or

\item $q_1=q|_{a\star}$ and $q_2=q|_{\star c}$.
\end{enumerate}
\mlabel{it:bint}
\end{enumerate}
\mlabel{defn:bwrel}
\end{defn}

\begin{remark}
\begin{enumerate}
\item Suppose the placements $(u_1, q_1)$ and $(u_2,q_2)$ are nested. If $q_2={q_1}|_q$, then
$q_1|_{q|_{u_2}} = q_2|_{u_2} = w = q_1|_{u_1}$. By Remark~\mref{re}, we have $q|_{u_2}=u_1$, i.e., $u_2$ is a bracketed subword of $u_1$.
Similarly, if $q_1={q_2}|_q$, then  $q|_{u_1}=u_2$, i.e., $u_1$ is a bracketed subword of $u_2$.

\item Suppose the placements $(u_1, q_1)$ and $(u_2,q_2)$ are intersecting. If $ q_1=q|_{\star c}$ and $q_2=q|_{a\star}$, then
    $$q|_{u_1 c}=q_1|_{u_1}=w=q|_{abc}\,\, and \,\, q|_{a u_2}=q_2|_{u_2}=w=q|_{abc}.$$
Again by Remark~\mref{re}, $u_1 c=abc$ and $a u_2=abc$. So $u_1=ab$ and $u_2=bc.$
Similarly, if $q_1=q|_{a\star}$ and $q_2=q|_{\star c}$, then $u_1=ab$ and $u_2=bc$.
\end{enumerate}
\mlabel{rem}
\end{remark}

\begin{lemma}~{\rm \cite[Thm.~4.11]{ZheG}}
Let $w\in\frakM(X)$. For any two \plas $(u_1, q_1)$ and $(u_2,q_2)$ in $w$, exactly one of the following is true$\,:$
\begin{enumerate}
\item
$(u_1, q_1)$ and $(u_2,q_2)$ are separated$\,;$
\item
$(u_1, q_1)$ and $(u_2,q_2)$ are nested$\,;$
\item
$(u_1, q_1)$ and $(u_2,q_2)$ are intersecting. \mlabel{it:inter}
\end{enumerate}
\mlabel{lem:thrrel}
\end{lemma}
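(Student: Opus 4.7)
The plan is to prove the trichotomy by induction on $\dep(w)$, doing a case analysis on how the unique $\star$ symbols of $q_1$ and $q_2$ sit inside the canonical top-level decomposition of $w=w_1\cdots w_m$ with each $w_k\in X\cup\lc\frakM(X)\rc$. Throughout, uniqueness of the decomposition and of $\star$-occurrences (together with Remark~\mref{re}) will let me cancel matching contexts freely.

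For the base case $\dep(w)=0$, the symbols $q_1,q_2$ live in $M(X\cup\{\star\})$ with no brackets. Writing $q_i=\alpha_i\star\beta_i$ with $\alpha_i,\beta_i\in M(X)$, the identity $\alpha_1 u_1\beta_1=\alpha_2 u_2\beta_2$ in the free monoid splits, after assuming $|\alpha_1|\leq|\alpha_2|$, into three mutually exclusive subcases according to whether $|\alpha_1|+|u_1|\leq|\alpha_2|$ (separated, choosing $p=\alpha_1\star_1\beta_1'\star_2\beta_2$ with $\alpha_1\beta_1'=\alpha_2$), or $|\alpha_1|+|u_1|\geq|\alpha_2|+|u_2|$ (nested, with $u_2$ a factor of $u_1$), or strictly between (intersecting, producing $a,b,c$ directly from the word positions). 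This is the classical trichotomy for overlaps in a free monoid.

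For the inductive step, observe that each $\star$ in $q_i$ either appears as a top-level factor of $q_i$, or sits strictly inside a top-level bracket of $q_i$. If both $\star$'s are top-level, the argument is formally identical to the base case, applied to the canonical decomposition of $w$ viewed as a word over the alphabet $X\cup\lc\frakM(X)\rc$. If exactly one $\star$ is top-level, say $q_1=\alpha_1\star\beta_1$ while $q_2=\alpha_2\lc q_2'\rc\beta_2$ with $\star$ inside $q_2'$, then $\lc q_2'|_{u_2}\rc$ is itself one of the top-level factors of $w$; comparing its top-level position to the slot spanned by $u_1$ yields either separated (the bracket lies outside the range of $u_1$) or nested (the bracket is one of the factors of $u_1$). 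If both $\star$'s lie inside top-level brackets, then either those two brackets are distinct top-level factors of $w$ (separated), or they coincide in a single bracket $\lc v\rc$, in which case both placements live inside a common context $q=\lc q'\rc$ with $\dep(v)<\dep(w)$; pulling the enclosing bracket outside reduces the question to two placements inside $v$, to which the induction hypothesis applies directly.

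The main obstacle will be the uniqueness assertion (\emph{exactly} one case). I would handle this by extracting structural invariants that cannot coexist: separatedness forces a two-hole context $p\in\frakM^{\star_1,\star_2}(X)$ in which the placements occupy disjoint holes; nestedness forces, via Remark~\mref{rem}, one of $u_1,u_2$ to be a bracketed subword of the other; and intersecting forces the existence of a common nontrivial infix $b\in\frakM(X)\setminus\{1\}$ at a common top level with $u_1=ab$, $u_2=bc$. A short argument using the freeness of $\frakM(X)$ (Remark~\mref{re}) and the single-$\star$ condition shows that any pairwise coexistence would force either $b=1$ or a duplicated $\star$, a contradiction. Keeping the case analysis manageable hinges on consistently tracking, during the induction, the top-level position of each $\star$; once that bookkeeping is fixed, the remainder is routine.
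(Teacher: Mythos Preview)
The paper does not give a proof of this lemma at all: it is quoted verbatim as \cite[Thm.~4.11]{ZheG} and used as a black box, so there is no in-paper argument to compare your proposal against.

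That said, your plan is essentially the standard route (and is in the spirit of the proof in~\cite{ZheG}): induct on $\dep(w)$ and case-split on whether each $\star$ is itself a top-level letter of $q_i$ or lies strictly inside a top-level bracket, reducing the ``both inside the same bracket'' case to a word of smaller depth. Two points deserve more care than your sketch suggests. First, in the mixed case you implicitly use that if $\star$ is a top-level letter of $q_1=\alpha_1\star\beta_1$ then, by uniqueness of the canonical decomposition $w=w_1\cdots w_m$, the triple $(\alpha_1,u_1,\beta_1)$ must align with that decomposition, i.e.\ $u_1=w_{i+1}\cdots w_j$ for some $i\le j$; this is what makes ``the bracket is a single top-level factor, hence either entirely inside the span of $u_1$ or entirely outside'' work and rules out intersecting in that subcase. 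State it explicitly. Second, the ``exactly one'' part is where most of the actual work hides: your invariants are the right ones, but showing pairwise incompatibility in $\frakM(X)$ (not just in a free monoid) still requires unwinding Definition~\mref{defn:bwrel} together with Remark~\mref{re} in each of the three pairings, and the nested-vs-intersecting case in particular is fiddly. None of this is a genuine gap, but expect the write-up to be longer than ``routine'' suggests.
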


\begin{lemma}
Let $w\in\frakM(X)$. For any two placements $(u_1, q_1)$ and $(u_2, q_2)$ in $w$, if the breadth $|u_1|$ is 1, then $(u_1, q_1)$ and $(u_2, q_2)$  cann't  be intersecting.
\mlabel{lem:inter}
\end{lemma}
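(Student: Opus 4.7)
The plan is to argue by contradiction. Suppose $(u_1,q_1)$ and $(u_2,q_2)$ are intersecting. By Definition~\ref{defn:bwrel}~(\ref{it:bint}), there exist $q\in\frakM^\star(X)$ and $a,b,c\in\frakM(X)\setminus\{1\}$ with $w=q|_{abc}$, and either (a) $q_1=q|_{\star c}$, $q_2=q|_{a\star}$, or (b) $q_1=q|_{a\star}$, $q_2=q|_{\star c}$.

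First I would extract the shape of $u_1$ in each sub-case, which is essentially the computation recorded in Remark~\ref{rem}. In sub-case (a), the equality $q_1|_{u_1}=w$ gives $q|_{u_1 c}=q|_{abc}$, and the freeness recorded in Remark~\ref{re} forces $u_1 c = abc$, hence $u_1=ab$. Sub-case (b) yields $u_1=bc$ by the symmetric argument.

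Next I would invoke additivity of breadth on concatenations of non-identity bracketed words: if $a=a_1\cdots a_p$ and $b=b_1\cdots b_q$ are the unique decompositions with each factor in $X\cup\lc\frakM(X)\rc$, then $a_1\cdots a_p b_1\cdots b_q$ is the unique decomposition of $ab$, so $|ab|=p+q=|a|+|b|$. Since $a,b,c\in\frakM(X)\setminus\{1\}$, each has breadth at least $1$. Hence in either sub-case $|u_1|\geq 2$, contradicting the hypothesis $|u_1|=1$.

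There is no serious obstacle here; the only subtle point is to be careful that additivity of $|\cdot|$ genuinely requires both factors to be non-identity (which is precisely why the defining clause~(\ref{it:bint}) insists that $a,b,c\neq 1$). Once that is in hand, the lemma follows immediately from Remark~\ref{rem} and a one-line breadth count.
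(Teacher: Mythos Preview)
Your argument is correct and matches the paper's proof essentially line for line: both suppose intersecting, invoke Definition~\ref{defn:bwrel}(\ref{it:bint}) and Remark~\ref{rem} to get $u_1=ab$ or $u_1=bc$ with $a,b,c\neq 1$, and then conclude $|u_1|\geq 2$. Your explicit justification of breadth additivity is a welcome addition, as the paper leaves this implicit.
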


\begin{proof}
Suppose the placements $(u_1, q_1)$ and $(u_2, q_2)$ are intersecting.
From Definition~\mref{defn:bwrel}~(\mref{it:bint}), there are $q \in \frakM^{\star}(X)$ and  $a, b, c \in \frakM(X)\backslash\{1\}$ such that $w=q|_{abc}$ and either $q_1=q|_{\star c}$ and $q_2=q|_{a\star}$, or $q_1=q|_{a\star}$ and $q_2=q|_{\star c}$.
For the former case, we have $u_1=ab$ and $ u_2=bc$ by Remark~\mref{rem};
for the later case, we get $u_1=bc$ and $ u_2=ab.$
Since $a,b,c\in \frakM(X)\setminus\{1\}$, it follows that $|a|,|b|,|c|>1$ and so $|u_1|\geq 2$ in both cases, contradicting that $|u_1| =1$.
\end{proof}

\begin{lemma}
Let $S$ be the binary relation given in Eq.~(\mref{eq:defns}) and
$\alpha, \beta\in S$.
Suppose $\stars{q_1}{{\lbar{\alpha}}} = \stars{q_2}{\lbar{\beta}}$ for some $q_1, q_2\in \frakM^\star(X)$.
If the placements $({\lbar{\alpha}}, q_1)$ and $(\lbar{\beta}, q_2)$ are separated, then $\stars{q_1}{\re{\alpha}} \downarrow_{\Pi_S}\stars{q_2}{\re{\beta}}$.
\mlabel{lem:sepe}
\end{lemma}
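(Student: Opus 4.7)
The plan is to exploit the ``separated'' structure directly: because the two subword placements live in disjoint regions of $w$, the two rewritings commute, and we can obtain a common descendant by applying each rewrite once, in either order. Concretely, I will produce a single element $p|_{R(\alpha), R(\beta)}$ to which both $q_1|_{R(\alpha)}$ and $q_2|_{R(\beta)}$ rewrite in one step.

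First I would unpack the separation hypothesis using Definition~\mref{defn:bwrel}(\mref{it:bsep}): fix $p \in \frakM^{\star_1,\star_2}(X)$ with $q_1|_{\star_1} = p|_{\star_1,\bar{\beta}}$, $q_2|_{\star_2} = p|_{\bar{\alpha},\star_2}$, and $w = p|_{\bar{\alpha},\bar{\beta}}$. Because substitutions into the two distinct $\star$-positions of $p$ commute (by the freeness of $\frakM(X)$ used in Remark~\mref{re}), one gets the identifications
\begin{equation*}
q_1|_{R(\alpha)} = p|_{R(\alpha),\bar{\beta}} \quad\text{and}\quad q_2|_{R(\beta)} = p|_{\bar{\alpha},R(\beta)}.
\end{equation*}

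Next, to produce a one-step rewrite from $q_1|_{R(\alpha)}$, I form the $\star$-bracketed word $\tilde{q}_1 := p|_{R(\alpha),\star}$ (substitute $R(\alpha)$ for $\star_1$ and keep $\star$ in place of $\star_2$), so that $\tilde{q}_1|_{\bar{\beta}} = q_1|_{R(\alpha)}$ and $\tilde{q}_1|_{R(\beta)} = p|_{R(\alpha),R(\beta)}$. Inspecting the three families of elements of $S$ in Eq.~(\mref{eq:defns}) against the monomial order recalled just above Lemma~2.10, one checks $\bar{\beta} > R(\beta)$ (namely $\lc\lc w\rc\rc > w$, $\lc uv\rc > \lc v\rc\lc u\rc$, and $\lc 1\rc > 1$), so $(\tilde{q}_1|_{\bar{\beta}}, \tilde{q}_1|_{R(\beta)}) \in \Pi_S$ by Eq.~(\mref{eq:piss}). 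Thus $q_1|_{R(\alpha)} \to_{\Pi_S} p|_{R(\alpha),R(\beta)}$. By the symmetric construction with $\tilde{q}_2 := p|_{\star,R(\beta)}$, one gets $q_2|_{R(\beta)} \to_{\Pi_S} p|_{R(\alpha),R(\beta)}$. Joinability follows.

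The only genuinely delicate point is the commutation of the two substitutions into the $(\star_1,\star_2)$-bracketed word $p$, i.e., the identity $p|_{R(\alpha),\bar{\beta}} = (p|_{R(\alpha),\star_2})|_{\star_2 \mapsto \bar{\beta}} = (p|_{\star_1,\bar{\beta}})|_{\star_1 \mapsto R(\alpha)}$, and likewise with $R(\beta)$ in the second slot. This is essentially the freeness of $\frakM(X^{\star 2})$ and is already implicit in how $(\star_1,\star_2)$-bracketed words are defined, but it is the one step that must be stated carefully; everything else is a direct consequence of the definitions and the comparison $\bar{\gamma} > R(\gamma)$ for every $\gamma \in S$.
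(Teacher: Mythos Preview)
Your proposal is correct and follows essentially the same route as the paper: use the separation hypothesis to extract a $(\star_1,\star_2)$-bracketed word $p$, identify $q_1|_{R(\alpha)} = p|_{R(\alpha),\bar\beta}$ and $q_2|_{R(\beta)} = p|_{\bar\alpha,R(\beta)}$, and rewrite each in one step to the common descendant $p|_{R(\alpha),R(\beta)}$. The paper is terser (it omits the explicit auxiliary $\tilde q_1,\tilde q_2$ and the verification $\bar\gamma>R(\gamma)$), but the argument is the same.
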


\begin{proof}
In view of Definition~ \mref{defn:bwrel}~(\mref{it:bsep}), there exists $p\in \frakM^{\star_1,\star_2}(X)$ such that
$$
\stars{q_1}{\star_1}= \stars{p}{\star_1,\,\lbar{\beta}}\,\text{ and }\,
\stars{q_2}{\star_2}= \stars{p}{\lbar{\alpha},\,\star_2},
$$
whence
$$q_1|_{\re{\alpha}}=p|_{\re{\alpha},\,\lbar{\beta}} \topi p|_{\re{\alpha},\,\re{\beta}},$$
$$q_2|_{\re{\beta}}=p|_{\lbar{\alpha},\,\re{\beta}} \topi p|_{\re{\alpha},\,\re{\beta}}.$$
So we conclude that $\stars{q_1}{\re{\alpha}} \downarrow_{\Pi_S}\stars{q_2}{\re{\beta}}$.
\end{proof}

\begin{lemma}
Let $S$ be the binary relation given in Eq.~(\mref{eq:defns}) and
$\alpha, \beta\in S$.
Suppose $\stars{q_1}{{\lbar{\alpha}}} = \stars{q_2}{\lbar{\beta}}$  for some $q_1, q_2\in \frakM^\star(X)$.
If the placements $(\lbar{\alpha}, q_1)$ and $(\lbar{\beta}, q_2)$ are nested, then
$\stars{q_1}{\re{\alpha}} \downarrow_{\Pi_S} \stars{q_2}{\re{\beta}}.$
\mlabel{lem:nest}
\end{lemma}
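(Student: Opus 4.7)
The plan is to exploit the symmetry in Definition~\mref{defn:bwrel}~(\mref{it:bnes}) to reduce to the case $q_2 = q_1|_q$ for some $q\in\frakM^\star(X)$, at which point Remark~\mref{rem} gives $q|_{\lbar{\beta}}=\lbar{\alpha}$, so $\lbar{\beta}$ is a bracketed subword of $\lbar{\alpha}$. The argument then splits into the trivial case $q=\star$ and the proper-nesting case $q\neq\star$.

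When $q=\star$ we have $\lbar{\alpha}=\lbar{\beta}$. Since every left-hand side in Eq.~(\mref{eq:defns}) has breadth one and is of the form $\lc Y\rc$ with $Y$ uniquely determining the underlying rule (by the freeness of $\frakM(X)$ and the conditions $u,v\neq 1$ in $\psi$), a direct comparison of the three possible contents, $Y=\lc w\rc$ of breadth one, $Y=uv$ of breadth at least two, and $Y=1$ empty, shows that the types of $\alpha$ and $\beta$ must coincide and hence $\alpha=\beta$. Joinability is then immediate.

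When $q\neq\star$, the outermost token of $\lbar{\alpha}$ is a single bracket, so $q=\lc q'\rc$ with $q'|_{\lbar{\beta}}=X$, where $X\in\{\lc w\rc,\,uv,\,1\}$ is the content of $\lbar{\alpha}$. The subcase $\alpha=\omega$ ($X=1$) is impossible because $\lbar{\beta}$ is non-empty. For $\alpha=\phi(w)$ either $q'=\star$ (so $\lbar{\beta}=\lc w\rc$, and each possible $\beta$---namely $\phi(w')$ with $\lc w'\rc=w$, $\psi(u,v)$ with $uv=w$, or $\omega$ with $w=1$---is followed by a short chain of $\phi$-, $\psi$-then-$\phi\phi$-, or $\omega$-steps from $q_2|_{\re{\beta}}$ to reach the common reduct $q_1|_w$), or $q'=\lc q''\rc$ with $q''|_{\lbar{\beta}}=w$ (a single $\phi$-step from $q_2|_{\re{\beta}}$ and a single $\beta$-step from $q_1|_{\re{\alpha}}$ both land on $q_1|_{q''|_{\re{\beta}}}$). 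For $\alpha=\psi(u,v)$, the breadth-one insertion $\lbar{\beta}$ must sit entirely inside $u$ or entirely inside $v$; say $u=p|_{\lbar{\beta}}$ and $q'=pv$, whence a $\beta$-step applied inside $u$ in $q_1|_{\re{\alpha}}$ and a $\psi$-step applied to $q_2|_{\re{\beta}}=q_1|_{\lc p|_{\re{\beta}}\,v\rc}$ meet at $q_1|_{\lc v\rc\lc p|_{\re{\beta}}\rc}$.

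The main obstacle is the boundary behaviour of the last case when $p|_{\re{\beta}}=1$ (which arises exactly when $\beta$ is $\omega$ or $\phi(1)$ and $p=\star$), because then $\psi$ cannot be applied to $\lc p|_{\re{\beta}}\,v\rc=\lc v\rc$. To overcome it, I observe that the chain $u\astarrowpi p|_{\re{\beta}}=1$ lifts to $\lc u\rc\astarrowpi\lc 1\rc\to_\omega 1$, so that $q_1|_{\re{\alpha}}=q_1|_{\lc v\rc\lc u\rc}$ further reduces to $q_1|_{\lc v\rc\cdot 1}=q_1|_{\lc v\rc}=q_2|_{\re{\beta}}$; the $v$-side is symmetric. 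Handling this degenerate configuration uniformly with the generic $\psi$-step is the only place where the proof does not reduce to a one-step mechanical ``rewrite on both sides and compare'' argument.
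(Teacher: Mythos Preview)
Your overall strategy---reduce by symmetry to $\lbar{\beta}$ sitting inside $\lbar{\alpha}$, then branch on the type of the outer rule $\alpha$---mirrors the paper's case analysis and is sound in almost every branch. There is, however, a real gap in your treatment of the case $q=\star$.

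You argue that $\lbar{\alpha}=\lbar{\beta}$ forces $\alpha=\beta$. This is correct when the common content $Y$ has breadth~$1$ (type $\phi$) or is empty (type $\omega$), but it fails for the $\psi$-type: from $\lc u'v'\rc=\lc uv\rc$ one only gets $u'v'=uv$, and a word of breadth $\geq 2$ generally admits several nontrivial factorisations. For instance with $u=x$, $v=yz$, $u'=xy$, $v'=z$ one has $\lbar{\psi(u,v)}=\lbar{\psi(u',v')}$ while
\[
\re{\psi(u,v)}=\lc yz\rc\lc x\rc\neq\lc z\rc\lc xy\rc=\re{\psi(u',v')}.
\]
Joinability still holds---writing, say, $u'=uw$ and $v=wv'$ with $w\neq 1$, both $\lc v\rc\lc u\rc=\lc wv'\rc\lc u\rc$ and $\lc v'\rc\lc u'\rc=\lc v'\rc\lc uw\rc$ reduce by a single $\psi$-step to $\lc v'\rc\lc w\rc\lc u\rc$---but this extra argument is genuinely needed, and your ``hence $\alpha=\beta$'' is false as stated. (The paper's own Subcase~2.1, which simply refers back to Subcase~1.1, is terse on exactly this point as well.)

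The remaining branches of your argument, including the degenerate configuration $p|_{\re{\beta}}=1$ that you single out, are handled correctly and in essentially the same way as the paper.
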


\begin{proof}
Suppose the two placements $(\lbar{\alpha}, q_1)$ and $(\lbar{\beta}, q_2)$ are nested.
According to the choice of $\alpha$ and $\beta$, we have the following cases to consider.

\noindent{\bf Case 1.} $\lbar{\alpha} = \lbar{\phi(u)} =\lc\lc u \rc\rc$ and $\lbar{\beta}= \lbar{\phi(v)} = \lc\lc v \rc\rc.$
By symmetry, we may assume that $q_{1}=q_{2}|_{q}$ for some $q\in \frakM^\star(X)$.
Then by Remark~\mref{rem}, $q|_{\lc\lc u \rc\rc} = \lc\lc v \rc\rc$, i.e.,
$\lc\lc u \rc\rc$ is a bracketed subword of $\lc\lc v \rc\rc$.

\noindent{\bf Subcase 1.1.}  $\lc\lc u \rc\rc=\lc\lc v \rc\rc$. Then $\alpha = \beta$, $q= \star $ and $q_1=q_2$. Hence $\stars{q_1}{\re{\alpha}} = \stars{q_2}{\re{\beta}}$ and   $\stars{q_1}{\re{\alpha}} \downarrow_{\Pi_S} \stars{q_2}{\re{\beta}}$ trivially.

\noindent{\bf Subcase 1.2.}  $\lc\lc u \rc\rc = \lc v \rc$.
Then $v=\lc u \rc$, $q|_{\lc\lc u\rc\rc}=\lc\lc v\rc\rc=\lc\lc\lc u\rc\rc\rc=\lc\star\rc|_{\lc\lc u\rc\rc}$ and $q=\lc \star \rc$.
Hence
$$q_1|_{\re\alpha}=q_1|_u= (q_2|_q)|_u = q_2|_{q|_u} =  q_2|_{\lc \star \rc|_u} =  q_2|_{\lc u \rc} = q_2|_v  = q_2|_{\re\beta}$$
%
and so $\stars{q_1}{\re{\alpha}} \downarrow_{\Pi_S} \stars{q_2}{\re{\beta}}$ trivially.

\noindent{\bf Subcase 1.3.}  $\lc\lc u \rc\rc$ is a bracketed subword of $ v $.
Then there exists $p\in \frakM^\star(X)$ such that $ v = \stars{p}{\lc\lc u\rc\rc}$ and $ q = \lc\lc p\rc\rc$,
which implies
\begin{equation*}
q_1|_{\re\alpha}=q_1|_u= (q_2|_q)|_u = q_2|_{q|_u} =  q_2|_{\lc\lc p \rc\rc|_u} =  q_2|_{\lc\lc p|_u \rc\rc} \to_\phi q_2|_{ p|_u},
\end{equation*}
$$q_2|_{\re\beta}=q_2|_v= q_2|_{ p|_{\lc\lc u \rc\rc}} \to_\phi q_2|_{ p|_u}.$$
Consequently $\stars{q_1}{\re{\alpha}} \downarrow_{\Pi_S} \stars{q_2}{\re{\beta}}$.

\noindent{\bf Case 2.} $\lbar{\alpha}=\lbar{\psi(u',v')} = \lc u'v'\rc$ and $\lbar{\beta}= \lbar{\psi(u,v)} =\lc uv\rc.$
By symmetry, we may assume that $q_{1}=q_{2}|_{q}$ for some $q\in \frakM^\star(X)$.
Then by Remark~\mref{rem}, $q|_{\lc u'v'\rc} = \lc uv\rc$, i.e., $\lc u'v'\rc$ is a bracketed subword of $\lc uv\rc$.

\noindent{\bf Subcase 2.1.} $\lc u'v'\rc = \lc uv\rc$. This is similar to Subcase 1.1.

\noindent{\bf Subcase 2.2.} $\lc u'v'\rc$ is a bracketed subword of $u$ or $v$. By symmetry, we may assume $\lc u'v'\rc$ is a bracketed subword of $ u $. i.e., there exists $p\in \frakM^\star(X)$ such that $u = p|_{\lc u'v'\rc}$ and $q = \lc pv\rc$.
Whence
$$q_1|_{\re\alpha}=q_1|_{\lc v'\rc\lc u' \rc} = (q_2|_q)|_{\lc v'\rc\lc u' \rc} = q_2|_{q|_{\lc v'\rc\lc u' \rc}}  = q_2|_{\lc pv \rc|_{\lc v'\rc\lc u' \rc}}= q_2|_{\lc p|_{\lc v' \rc \lc u' \rc} v \rc}
\rightarrow_{\psi}q_2|_{ \lc v\rc\lc p|_{\lc v'\rc\lc u' \rc} \rc},$$
$$q_2|_{\re\beta}=q_2|_{\lc v\rc\lc u \rc} = q_2|_{\lc v\rc\lc p \rc|_{\lc u'v'\rc}} \rightarrow_\psi q_2|_{ {\lc v\rc\lc p \rc}|_{\lc v'\rc\lc u' \rc}}=q_2|_{ {\lc v\rc\lc p}|_{\lc v'\rc\lc u' \rc} \rc}.$$
Hence $\stars{q_1}{\re{\alpha}} \downarrow_{\Pi_S} \stars{q_2}{\re{\beta}}$.

\noindent{\bf Case 3.} $\lbar{\alpha}= \lbar{\phi(w)} =\lc\lc w \rc\rc$ and $\lbar{\beta}= \lbar{\psi(u,v)} =\lc uv\rc.$

\noindent{\bf Subcase 3.1.} $q_{1}=q_{2}|_{q}$ for some $q\in \frakM^\star(X)$.
By Remark~\mref{rem}, $q|_{\lc\lc w\rc\rc} = \lc uv\rc,$
that is,  $\lc\lc w\rc\rc$ is a bracketed subword of $\lc uv\rc$.

\noindent{\bf Subcase 3.1.1.} $\lc\lc w\rc\rc=\lc uv \rc$. Then $\lc w\rc=uv$ and so $u=1$ or $v=1$, which contradicts from
Eq.~(\mref{eq:defns2}) that ${\beta}\in S$.

\noindent{\bf Subcase 3.1.2.} $\lc\lc w\rc\rc$ is a bracketed subword of $uv$.
By symmetry, we can assume that  $\lc\lc w\rc\rc$ is a bracketed subword of $u$, i.e., there exists $p\in \frakM^\star(X)$ such that $u = p|_{\lc\lc w\rc\rc}$ and $q = \lc pv\rc$.
So
$$q_1|_{\re\alpha} = q_1|_{ w } = (q_2|_q)|_w = q_2|_{q|_w} = q_2|_{\lc pv\rc|_w} = q_2|_{\lc p|_w v\rc}  \to_\psi q_2|_{ {\lc v \rc \lc p|_{w} \rc}},$$
$$q_2|_{\re\beta}=q_2|_{\lc v \rc \lc u \rc} = q_2|_{\lc v \rc \lc p|_{\lc\lc w\rc\rc} \rc}  \to_\phi q_2|_{ {\lc v \rc \lc p|_{w} \rc}}.$$
Thus  $\stars{q_1}{\re{\alpha}} \downarrow_{\Pi_S} \stars{q_2}{\re{\beta}}$.

\noindent{\bf Subcase 3.2.}  $q_{2} = q_{1}|_{q}$ for some $q\in \frakM^\star(X)$.
By Remark~\mref{rem}, $\lc\lc w \rc\rc=q|_{\lc uv\rc},$ i.e.,
$\lc uv\rc$ is a bracketed subword of $\lc\lc w \rc\rc$.
If $\lc uv\rc=\lc\lc w \rc\rc$, similar to Case 3.1.1, we get $\stars{q_1}{\re{\alpha}} \downarrow_{\Pi_S} \stars{q_2}{\re{\beta}}$. Suppose $\lc uv\rc \neq \lc\lc w \rc\rc$.

\noindent{\bf Subcase 3.2.1.} $\lc uv\rc=\lc w \rc$. Then
$$uv=w,\, q|_{\lc uv\rc}=\lc \lc w \rc\rc=\lc\lc uv\rc\rc=\lc \star\rc|_{\lc uv \rc} \,\text{ and }\, q=\lc \star\rc.$$
Consequently $q_{1}|_{\re{\alpha}} =  q_{1}|_{ w }=q_1|_{uv}$ and
\begin{align*}
%
q_2|_{\re{\beta}} =& q_2|_{\lc v \rc \lc u \rc} = (q_1|_q)|_{\lc v \rc \lc u \rc} = q_1|_{q|_{\lc v \rc \lc u \rc}} = q_1|_{\lc \star\rc|_{\lc v \rc \lc u \rc}} =q_1|_{\lc  \lc v \rc \lc u \rc \rc}\\
\rightarrow_\psi& q_1|_{\lc\lc u \rc\rc \lc\lc v \rc\rc}\rightarrow_\phi  q_1|_{u\lc\lc v \rc\rc}\rightarrow_\phi  q_1|_{uv}.
\end{align*}
Hence $\stars{q_1}{\re{\alpha}} \downarrow_{\Pi_S} \stars{q_2}{\re{\beta}}$.

\noindent{\bf Subcase 3.2.2.} $\lc uv\rc$ is a bracketed subword of $ w$, i.e., there exists $p\in \frakM^\star(X)$ such that $w = p|_{\lc uv\rc}$ and $q = \lc\lc p\rc\rc$. Thus
\begin{align*}
q_{1}|_{\re\alpha}=&\, q_{1}|_{ w }= q_1|_{p|_{\lc uv\rc}} \to_\psi  q_1|_{p|_{\lc v \rc \lc u \rc}},\\
q_2|_{\re\beta}= q_2|_{\lc v \rc \lc u \rc} = (q_1|_q)|_{\lc v \rc \lc u \rc} =&\, q_1|_{q|_{\lc v \rc \lc u \rc}} = q_1|_{\lc  \lc p \rc\rc|_{\lc v \rc \lc u \rc}} = q_1|_{\lc  \lc p|_{\lc v \rc \lc u \rc} \rc\rc}  \rightarrow_\phi q_1|_{p|_{\lc v \rc \lc u \rc}},
\end{align*}
and so $\stars{q_1}{\re{\alpha}} \downarrow_{\Pi_S} \stars{q_1}{\re{\beta}}$.

\noindent{\bf Case 4.} $\lbar{\alpha}= \lbar{\phi(w)} =\lc\lc w \rc\rc$ and $\lbar{\beta}= \lbar{\omega} =\lc 1\rc.$

\noindent{\bf Subcase 4.1.} $q_{1}=q_{2}|_{q}$ for some $q\in \frakM^\star(X)$.
Then by Remark~\mref{rem}, $q|_{\lc\lc w\rc\rc} = \lc 1\rc,$
which implies that $\lc\lc w\rc\rc$ is a bracketed subword of $\lc 1\rc$, a contradiction by comparing the depth.

\noindent{\bf Subcase 4.2.}  $q_{2} = q_{1}|_{q}$ for some $q\in \frakM^\star(X)$.
By Remark~\mref{rem}, $\lc\lc w \rc\rc=q|_{\lc 1\rc},$ i.e.,
$\lc 1\rc$ is a bracketed subword of $\lc\lc w \rc\rc$. Note that $\lc 1\rc\neq\lc\lc w \rc\rc$.

\noindent{\bf Subcase 4.2.1.} $\lc 1\rc=\lc w \rc$. Then $w=1,$ $q|_{\lc 1\rc}=\lc\lc w\rc\rc=\lc\star\rc|_{\lc 1\rc}$ and  $q=\lc \star\rc$. Hence
\begin{align*}
q_{1}|_{\re\alpha}=&\, q_{1}|_{ w }=q_1|_{1},\\
q_2|_{\re\beta}= q_2|_{1} = (q_1|_q)|_{1}=& \, q_1|_{\lc \star\rc|_{1}} =q_1|_{\lc  1 \rc }\rightarrow_\omega q_1|_{1},
\end{align*}
and so  $\stars{q_1}{\re{\alpha}} \downarrow_{\Pi_S} \stars{q_2}{\re{\beta}}$.

\noindent{\bf Subcase 4.2.2.} $\lc 1\rc$ is a bracketed subword of $ w$, i.e., there exists $p\in \frakM^\star(X)$ such that $w = p|_{\lc 1\rc}$ and $q = \lc\lc p\rc\rc$. Then
\begin{align*}q_{1}|_{\re\alpha}=&\, q_{1}|_{ w }=q_1|_{p|_{\lc 1\rc}} \rightarrow_\omega  q_1|_{p|_{1}},\\
q_2|_{\re\beta}=&\, q_2|_{1} = (q_1|_q)|_{1}= q_1|_{\lc  \lc p \rc\rc|_{1}} \rightarrow_\phi q_1|_{p|_{1}}.
\end{align*}
Therefore $\stars{q_1}{\re{\alpha}} \downarrow_{\Pi_S} \stars{q_1}{\re{\beta}}$.

\noindent{\bf Case 5.} $\lbar{\alpha}=\lbar{\psi(u,v)}=\lc uv \rc$ and $\lbar{\beta}=\lbar{\omega}=\lc 1 \rc$. This is similar to Case 4.

\noindent{\bf Case 6.} $\lbar{\alpha}=\lbar{\omega}=\lc 1 \rc$ and $\lbar{\beta}=\lbar{\omega}=\lc 1 \rc$. This case is trivial since $\alpha$ and $\beta$ are equal.

This completes the proof.
\end{proof}

Now we arrive at our first main result of this section.

\begin{theorem}
Let $X$ be a set and $S$ the binary relation given in Eq.~(\mref{eq:defns}). With
the monomial order $\leq$ given in~\cite{GSZ}, we have
\begin{enumerate}
\item the \term-rewriting system $\Pi_S$  is convergent. \mlabel{it:conv1}

\item the set $\irr(S) = \frakM(X)\setminus {\rm Dom } (\Pi_S)$ is a section  of $\frakM(X)/\langle S \rangle$.  \mlabel{it:section1}
\end{enumerate}
\mlabel{thm:section}
\end{theorem}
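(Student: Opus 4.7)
The plan is to apply Newman's Lemma (Lemma~\ref{lem:newman}) together with Theorem~\ref{thm:sect}. Since $\leq$ is a monomial order, Lemma~\ref{lem:termi} already tells us that $\Pi_S$ is terminating, so by Newman's lemma confluence reduces to local confluence. Once convergence is established, part~(\ref{it:section1}) is immediate from Theorem~\ref{thm:sect}.

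For local confluence, I would start from an arbitrary local fork $(f\to_{\Pi_S} g,\, f\to_{\Pi_S} h)$. By the definition of $\Pi_S$ in Eq.~(\ref{eq:piss}), this means there exist $\alpha,\beta\in S$ (up to reversal, which is harmless since we apply to the larger side) and $q_1,q_2\in\frakM^\star(X)$ with
\[
f=q_1|_{\lbar{\alpha}}=q_2|_{\lbar{\beta}},\qquad g=q_1|_{\re{\alpha}},\qquad h=q_2|_{\re{\beta}}.
\]
The two placements $(\lbar\alpha,q_1)$ and $(\lbar\beta,q_2)$ in $f$ must, by Lemma~\ref{lem:thrrel}, be separated, nested, or intersecting.

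The separated and nested cases are handled directly by Lemmas~\ref{lem:sepe} and~\ref{lem:nest}, which together yield $g\downarrow_{\Pi_S} h$. To rule out the intersecting case, I observe that each left-hand side $\lbar{\alpha}$ with $\alpha\in S$ is one of $\lc\lc w\rc\rc$, $\lc uv\rc$, or $\lc 1\rc$, and every such bracketed word has breadth $1$. Lemma~\ref{lem:inter} then forbids any intersecting pair of placements. Hence only the separated and nested cases can occur, and in both we have $g\downarrow_{\Pi_S} h$, proving local confluence.

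Combining termination (Lemma~\ref{lem:termi}) with local confluence via Newman's Lemma gives convergence of $\Pi_S$, which is part~(\ref{it:conv1}). Then Theorem~\ref{thm:sect} immediately gives that $\irr(S)=\frakM(X)\setminus\mathrm{Dom}(\Pi_S)$ is a section of $\frakM(X)/\langle S\rangle$, establishing part~(\ref{it:section1}). The main obstacle is not in this theorem itself but in the preparatory Lemma~\ref{lem:nest}, which requires a case analysis over the six (symmetric) pairings of the three generating relations $\phi$, $\psi$, $\omega$; once that lemma is in hand, the assembly above is routine.
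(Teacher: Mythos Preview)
Your proposal is correct and follows essentially the same route as the paper's proof: termination via Lemma~\ref{lem:termi}, reduction to local confluence by Newman's Lemma, the trichotomy of Lemma~\ref{lem:thrrel}, exclusion of the intersecting case by the breadth-$1$ observation and Lemma~\ref{lem:inter}, and then Lemmas~\ref{lem:sepe} and~\ref{lem:nest} for the remaining cases; part~(\ref{it:section1}) is deduced from Theorem~\ref{thm:sect}. Your closing remark that the real work lies in Lemma~\ref{lem:nest} is also exactly the point.
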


\begin{proof}
(\mref{it:conv1})
Since $\leq$ is a monomial order on $\frakM(X)$, $\Pi_S$ is terminating by Lemma~\mref{lem:termi}.
So we are left to prove that $\Pi_S$ is confluent.
From Lemma~\mref{lem:newman}, it suffices to show that $\Pi_S$ is locally confluent. Let
$$ q_1|_{\re{\alpha}} \prescript{}{\Pi_S}\leftarrow q_1|_{\lbar{\alpha}}= w = q_2|_{\lbar{\beta}} \topi q_2|_{\re{\beta}} $$
be an arbitrary local fork, where
$q_1, q_2\in \frakM^\star(X), \alpha,\beta\in S.$
From Eq.~(\mref{eq:defns}), both of the breadth of $\lbar{\alpha}$ and $\lbar{\beta}$ are 1, and so
the placements $({\lbar{\alpha}}, q_1)$ and $(\lbar{\beta}, q_2)$  cann't be intersecting by Lemma~\mref{lem:inter}.
If the placements $({\lbar{\alpha}}, q_1)$ and $(\lbar{\beta}, q_2)$ are separated and nested, then $\stars{q_1}{\re{\alpha}} \downarrow_{\Pi_S}\stars{q_2}{\re{\beta}}$ by
Lemmas~\mref{lem:sepe} and~\mref{lem:nest}.

(\mref{it:section1}) It follows from Item~(\mref{it:conv1}) and Theorem~\mref{thm:sect}.
\end{proof}

\begin{remark}
It is well known~\cite{Law} that $\ast$-monoid is also called monoid with involution,
and the free monoid with involution $\ast$ on a set $X$ is the free monoid $M(X\cup X^\ast)$,
where $X^\ast:=  \{x^\ast \mid x\in X\}$ is a disjoint copy of $X$.
The $\irr(S)$ obtained in Theorem~\mref{thm:section} is precisely the set $M(X\cup X^\ast)$
if we identify $\lc x\rc$ with $x^\ast$ for each $x\in X$.
\end{remark}
%

\subsection{Free groups}
Again as a special case of a well-known result in universal algebra~\cite[Prop.1.3.6]{Coh},
the free group  on a set $X$ is the quotient of $\frakM(X)$ by the operated congruence $\langle S\rangle$, where
\begin{equation}
S :=\{(\lc\lc w \rc\rc, w) ,\,   (\lc u v\rc, \lc v\rc  \lc u \rc),\, (\lc w \rc w, 1),\, (w \lc w \rc, 1) \mid w\in\frakM(X), u,v\in \frakM(X)\setminus\{1\}\}.\mlabel{eq:defns2}
\end{equation}
In this subsection, we turn to construct a section of the free group on a set $W$.
Write
$$\phi(w):= (\lc\lc w \rc\rc, w) ,\,  \psi(u, v):= (\lc u v\rc, \lc v\rc  \lc u \rc),\,  \varphi(w):= (\lc w \rc w, 1),\,  \chi(w):= (w \lc w \rc, 1),$$
where $w\in\frakM(X),u,v\in \frakM(X)\setminus\{1\}$.
Note that if $w=1$, then $w\lc w\rc=1\lc 1\rc=\lc 1\rc$. So $(\lc 1\rc, 1)\in S$.
Here again under the monomial order $\leq$ given in~\mcite{GSZ}, we have
$$ w< \lc\lc w \rc\rc,\, \lc v \rc \lc u \rc < \lc uv \rc,\, 1< \lc w \rc w,\, 1< w\lc w \rc\,\text{ for }\, w\in\frakM(X),u,v\in \frakM(X)\setminus\{1\}.$$

\begin{lemma}
Let $S$ be the binary relation given in Eq.~(\mref{eq:defns2}) and
$\alpha, \beta\in S$.
Suppose $\stars{q_1}{{\lbar{\alpha}}} = \stars{q_2}{\lbar{\beta}}$ for some $q_1, q_2\in \frakM^\star(X)$.
If the placements $({\lbar{\alpha}}, q_1)$ and $(\lbar{\beta}, q_2)$ are separated, then $\stars{q_1}{\re{\alpha}} \downarrow_{\Pi_S}\stars{q_2}{\re{\beta}}$.
\mlabel{lem:sepe}
\end{lemma}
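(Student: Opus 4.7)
The plan is to adapt, essentially verbatim, the argument used for the analogous separated-case lemma in the previous subsection (the $\ast$-monoid setting). The key observation is that the proof for separated placements does not depend on the particular form of the rewriting rules in $S$: it only uses that applying a rule replaces $\lbar{\alpha}$ by $\re{\alpha}$ (respectively $\lbar{\beta}$ by $\re{\beta}$) inside a surrounding context. Since the binary relation in Eq.~(\ref{eq:defns2}) is built from rules $\phi(w),\psi(u,v),\varphi(w),\chi(w)$, each of which gives a valid rewriting step $q|_{\lbar{\gamma}}\topi q|_{\re{\gamma}}$ under the monomial order $\leq$, the same argument applies without modification.

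Concretely, by Definition~\ref{defn:bwrel}~(\ref{it:bsep}), the assumption that the placements $(\lbar{\alpha}, q_1)$ and $(\lbar{\beta}, q_2)$ are separated yields a two-hole bracketed word $p \in \frakM^{\star_1,\star_2}(X)$ with
\[
\stars{q_1}{\star_1} = \stars{p}{\star_1,\,\lbar{\beta}}, \qquad \stars{q_2}{\star_2} = \stars{p}{\lbar{\alpha},\,\star_2}, \qquad w = \stars{p}{\lbar{\alpha},\,\lbar{\beta}}.
\]
Substituting into $q_1|_{\re{\alpha}}$ and $q_2|_{\re{\beta}}$ gives
\[
q_1|_{\re{\alpha}} = p|_{\re{\alpha},\,\lbar{\beta}} \topi p|_{\re{\alpha},\,\re{\beta}}, \qquad q_2|_{\re{\beta}} = p|_{\lbar{\alpha},\,\re{\beta}} \topi p|_{\re{\alpha},\,\re{\beta}},
\]
where the one-step arrows apply the rule $\beta$ (resp.\ $\alpha$) at the still-unrewritten hole. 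The common reduct $p|_{\re{\alpha},\,\re{\beta}}$ witnesses $q_1|_{\re{\alpha}} \downarrow_{\Pi_S} q_2|_{\re{\beta}}$.

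There is no real obstacle here: the extra rules $\varphi(w)$ and $\chi(w)$ in Eq.~(\ref{eq:defns2}) (compared with Eq.~(\ref{eq:defns})) do not complicate the separated case, since separation guarantees the two rewriting sites are syntactically disjoint, so the two reductions commute trivially. The only mild bookkeeping is to check that $p \in \frakM^{\star_1,\star_2}(X)$ (i.e., each of $\star_1, \star_2$ occurs exactly once), which is exactly the content of Definition~\ref{defn:bwrel}~(\ref{it:bsep}).
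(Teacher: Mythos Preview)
Your proposal is correct and matches the paper's approach exactly: the paper's own proof is a one-line remark that the argument of the earlier separated-case lemma applies verbatim because it does not depend on the concrete expressions of $\alpha$ and $\beta$. You have simply spelled out that argument in full, which is fine.
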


\begin{proof}
It is parallel to the proof of Lemma~\mref{lem:sepe}, because the proof of Lemma~\mref{lem:sepe} does not depend on the concrete expressions of $\alpha$ and $\beta$.
\end{proof}

\begin{lemma}
Let $S$ be the binary relation given in Eq.~(\mref{eq:defns2}) and
$\alpha, \beta\in S$.
Suppose $\stars{q_1}{{\lbar{\alpha}}} = \stars{q_2}{\lbar{\beta}}$ for some $q_1, q_2\in \frakM^\star(X)$.
If the placements $(\lbar{\alpha}, q_1)$ and $(\lbar{\beta}, q_2)$ are intersecting, then $\stars{q_1}{\re{\alpha}} \downarrow_{\Pi_S} \stars{q_2}{\re{\beta}}$.

\mlabel{lem:int}
\end{lemma}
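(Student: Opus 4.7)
The plan is to leverage Lemma~\ref{lem:inter} to drastically restrict which pairs $(\alpha,\beta)$ can arise. Recall that in Eq.~(\ref{eq:defns2}) the rewriting rules come in four flavours: $\phi(w)$, $\psi(u,v)$, $\varphi(w)$, $\chi(w)$. The domains $\lbar{\phi(w)}=\lc\lc w\rc\rc$ and $\lbar{\psi(u,v)}=\lc uv\rc$ both have breadth~$1$, while $\lbar{\varphi(w)}=\lc w\rc w$ and $\lbar{\chi(w)}=w\lc w\rc$ have breadth~$2$. Since the intersecting case requires both $\lbar{\alpha}$ and $\lbar{\beta}$ to have breadth at least $2$ (by Lemma~\ref{lem:inter}), we may assume $\alpha,\beta\in\{\varphi(\cdot),\chi(\cdot)\}$, leaving only four case pairs to examine: $(\varphi,\varphi)$, $(\varphi,\chi)$, $(\chi,\varphi)$, $(\chi,\chi)$.

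Next, I would unpack the intersecting condition via Definition~\ref{defn:bwrel}~(\ref{it:bint}) and Remark~\ref{rem}. There exist $q\in\frakM^\star(X)$ and $a,b,c\in\frakM(X)\setminus\{1\}$ such that $w=q|_{abc}$, and after symmetry in the two subcases of~(\ref{it:bint}) we may assume $\lbar{\alpha}=ab$, $\lbar{\beta}=bc$. Since both have breadth exactly $2$ and $|a|,|b|,|c|\geq 1$, this forces $|a|=|b|=|c|=1$. The overlap letter $b$ is then uniquely determined once one fixes the types of $\alpha$ and $\beta$: in the $(\varphi(u),\varphi(v))$ case, $b=u=\lc v\rc$; in $(\varphi(u),\chi(v))$, $b=u=v$; in $(\chi(u),\varphi(v))$, $b=\lc u\rc=\lc v\rc$; and in $(\chi(u),\chi(v))$, $b=\lc u\rc=v$.

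Then I would simply compute the two one-step rewrites in each case. Since $\re{\varphi(w)}=\re{\chi(w)}=1$, both rewrites collapse to $q|_a$ and $q|_c$ respectively (after the overlap is absorbed). In the $(\varphi,\chi)$ and $(\chi,\varphi)$ cases $a$ and $c$ coincide, giving $q|_a=q|_c$ and trivial joinability. In the $(\varphi(u),\varphi(v))$ case one finds $q|_c=q|_v$ and $q|_a=q|_{\lc\lc v\rc\rc}$, which can be joined by a single $\phi$-step $q|_{\lc\lc v\rc\rc}\to_{\phi}q|_v$; the $(\chi,\chi)$ case is analogous by symmetry. The opposite orientation of~(\ref{it:bint}) (where $\lbar{\alpha}=bc$ and $\lbar{\beta}=ab$) is handled by swapping the roles of $\alpha$ and $\beta$ in the above analysis.

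The main obstacle is not conceptual but bookkeeping: one must carefully track how each overlap letter $b$ pins down $u$ versus $v$, and then verify that the collapsed rewrites $q|_a$ and $q|_c$ either coincide or differ by exactly one nested pair of brackets that can be removed via $\phi$. No new combinatorial idea is required beyond the breadth count already furnished by Lemma~\ref{lem:inter}, so the proof is essentially a finite, symmetric case check.
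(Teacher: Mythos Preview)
Your reduction to $\alpha,\beta\in\{\varphi(\cdot),\chi(\cdot)\}$ via Lemma~\ref{lem:inter} is correct, but the next step contains a genuine error: you assert that $\lbar{\varphi(w)}=\lc w\rc w$ and $\lbar{\chi(w)}=w\lc w\rc$ have breadth~$2$. This is false in general --- for $w\in\frakM(X)$ the breadth of $\lc w\rc w$ is $1+|w|$, which is~$2$ only when $|w|=1$. Since the relation $S$ in Eq.~(\ref{eq:defns2}) allows arbitrary $w\in\frakM(X)$, your conclusion that $|a|=|b|=|c|=1$ is unjustified, and your case analysis covers only a tiny slice of the actual overlaps.

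The paper's proof handles the general situation. In Case~1 ($\varphi(u)$ vs.\ $\varphi(v)$), from $\lc u\rc u=ab$ and $\lc v\rc v=bc$ with $a,b,c\neq 1$ one deduces only that $b=\lc v\rc v_1$, $c=v_2$ (with $v=v_1v_2$), and $a=\lc u\rc u_1$, $b=u_2$ (with $u=u_1u_2$); hence $u_2=\lc v_1v_2\rc v_1$ and $a=\lc u_1\lc v_1v_2\rc v_1\rc u_1$. Joining $q|_a$ to $q|_c=q|_{v_2}$ then requires a chain of five rewriting steps (two $\psi$-steps, two $\varphi$-steps, and a $\phi$-step), not the single $\phi$-step you propose. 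The $(\varphi,\chi)$ case (Case~3) similarly splits into two subcases depending on the orientation, one of which requires an additional depth argument to force $u_1=1$. Your breadth-$2$ shortcut captures only the special case $v_1=u_1=1$, $|v_2|=|u_2|=1$.
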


\begin{proof}
Note that $\lbar {\phi(w)}=\lc \lc w \rc\rc$, $\lbar {\psi(u,v)}=\lc uv \rc$
and $|\lbar {\phi(u)}| =|\lbar {\psi(u,v)}|= 1$ for all $w\in \frakM(X), u, v\in \frakM(X)\setminus\{1\}$.
It follows from Lemma~\mref{lem:inter} that $\alpha,\beta\in \{ \varphi(u), \chi(u) \mid u\in \frakM(X) \}$.
Let $w:=q_1|_{\lbar{\alpha}} = q_2|_{\lbar{\beta}} $.
Then by Definition~\mref{defn:bwrel}(\mref{it:bint}), there are $q\in\frakM^{\star}(X)$ and $a, b, c\in \frakM(X)\setminus\{1\}$ such that $w=q|_{abc}$.
Depending on the forms of $\alpha$ and $\beta$, there are three cases to consider.

\noindent{\bf Case 1.} $\lbar{\alpha}= \lbar{\varphi(u)} = \lc u \rc u$ and $\lbar{\beta}= \lbar{\varphi(v)} =\lc v \rc v$ for some $u,v\in \frakM(X)$.
By symmetry, we may assume $q_1=q|_{\star c}$ and $q_2=q|_{a \star}.$
Using Remark~\mref{rem},  we get $\lc u\rc u=ab$ and $\lc v\rc v =bc.$
Since $|b|\geq 1$, $\lc v\rc$ is a bracketed subword of $b$. Suppose $v = v_1v_2$ and $b = \lc v\rc v_1$. Then $c = v_2$. Similarly from $\lc u\rc u = ab$, we can assume
$a = \lc u\rc u_1$ and $b=u_2$ with $u=u_1 u_2$. Then
$$u_2 = b = \lc v\rc v_1 = \lc v_1v_2\rc v_1\,\text{ and }\, a = \lc u\rc u_1 = \lc u_1 u_2\rc u_1 =   \lc u_1 \lc v_1v_2\rc v_1\rc u_1$$
and so
\begin{align*}
q_1|_{\re{\alpha}}=& q_1|_1=q|_{{\star c}|_1}=q|_c=q|_{v_2},\\
q_2|_{\re{\beta}}=& q_2|_1=q|_{{a \star }|_1}=q|_a=q|_{\lc u_1\lc v_1 v_2\rc v_1\rc u_1}\rightarrow_{\psi} q|_{\lc \lc v_1 v_2\rc v_1 \rc \lc u_1\rc u_1}\\
\rightarrow_{\varphi}& q|_{\lc \lc v_1 v_2\rc v_1 \rc}\rightarrow_{\psi} q|_{\lc \lc v_2\rc\lc v_1\rc v_1 \rc}\rightarrow_{\varphi} q|_{\lc \lc  v_2\rc \rc}\rightarrow_{\phi} q|_{v_2},
\end{align*}
which implies  $\stars{q_1}{\re{\alpha}} \downarrow_{\Pi_S} \stars{q_2}{\re{\beta}}.$

\noindent{\bf Case 2.} $\lbar{\alpha}= \lbar{\chi(u)} = u\lc u \rc$ and $\lbar{\beta}=\lbar{\chi(v)} = v\lc v \rc$ for some $u,v\in \frakM(X)$. This is similar to Case 1.

\noindent{\bf Case 3.} $\lbar{\alpha} = \lbar{\varphi(u)}  =\lc u \rc u$ and $\lbar{\beta}= \lbar{\chi(v)} = v\lc v \rc$,
or $\lbar{\alpha} = \lbar{\chi(u)}  = u\lc u \rc$ and $\lbar{\beta}= \lbar{\varphi(v)} = \lc v \rc v$ for some $u,v\in \frakM(X)$.
By symmetry, it suffices to consider the former case. Then according to Definition~\mref{defn:bwrel}(\mref{it:bint}), we have two subcases
to consider.

\noindent{\bf Subcase 3.1.} $q_1=q|_{\star c}$ and $q_2=q|_{a\star}.$
From Remark~\mref{rem}, $\lc u\rc u=ab$ and $ v \lc v\rc =bc.$
With a similar argument to Case 1, we can assume
\begin{align*}
b&=v_1,\, c=v_2\lc v_1 v_2\rc\, \text{ with }\, v=v_1v_2, \\
a&= \lc u_1 u_2\rc u_1, \, b= u_2 \, \text{ with }\, u=u_1u_2.
\end{align*}
Thus
\begin{align*}
q_1|_{\re{\alpha}}= & q_1|_1=q|_{{\star c}|_1}=q|_c=q|_{v_2\lc v_1 v_2\rc}\rightarrow_{\psi}q|_{v_2\lc v_2\rc \lc v_1 \rc}\rightarrow_{\chi}q|_{\lc v_1 \rc},\\
q_2|_{\re{\beta}}=& q_2|_1=q|_{{a \star }|_1}=q|_a=q|_{\lc u_1 u_2\rc u_1}\rightarrow_{\psi}q|_{\lc u_2\rc \lc u_1\rc u_1}\rightarrow_{\varphi}q|_{\lc u_2 \rc}=q|_{\lc v_1 \rc},
\end{align*}
and so $\stars{q_1}{\re{\alpha}} \downarrow_{\Pi_S} \stars{q_2}{\re{\beta}}.$

\noindent{\bf Subcase 3.2.} $q_1=q|_{a\star}$ and $ q_2=q|_{\star c}.$
From Remark~\mref{rem}, $\lc u\rc u=bc$ and $ v \lc v\rc =ab.$
Again similar to Case 1, we may suppose
$$ b=\lc u_1 u_2\rc u_1\,\text{ and }\,  c=u_2 \, \text{ with }\, u=u_1u_2. $$
Then $ a\lc u_1 u_2\rc u_1 = ab=v\lc v\rc.$
If $u_1\neq 1,$ then $\lc v \rc$ is a bracketed subword of $u_1$ and $a\lc u_1 u_2 \rc$ is a bracketed subword of $v$. So we get
 $$\dep(\lc v \rc) \leq \dep( u_1) < \dep(a\lc u_1 u_2 \rc ) \leq\dep(v),$$
a contradiction.
So
$$u_1=1, u_2 = u,  b=\lc u_1u_2\rc=\lc u\rc, c=u, a\lc u\rc = ab =v \lc v \rc,$$
which implies that $a=v$, $\lc u\rc = \lc v\rc$ and $u=v$. Thus
\begin{align*}
q_1|_{\re{\alpha}}=q_1|_1=q|_{{a \star}|_1}=q|_a=q|_v,\\
q_2|_{\re{\beta}}=q_2|_1=q|_{{ \star c}|_1}=q|_c=q|_u=q|_v,
\end{align*}
and so $\stars{q_1}{\re{\alpha}} \downarrow_{\Pi_S} \stars{q_2}{\re{\beta}}.$
\end{proof}

\begin{lemma}
Let $S$ be the binary relation given in Eq.~(\mref{eq:defns2}) and
$\alpha, \beta\in S$.
Suppose $\stars{q_1}{{\lbar{\alpha}}} = \stars{q_2}{\lbar{\beta}}$ for some $q_1, q_2\in \frakM^\star(X)$.
If the placements $(\lbar{\alpha}, q_1)$ and $(\lbar{\beta}, q_2)$ are nested, then
$\stars{q_1}{\re{\alpha}} \downarrow_{\Pi_S} \stars{q_2}{\re{\beta}}$.
\mlabel{lem:ne}
\end{lemma}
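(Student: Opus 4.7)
The plan is to follow the template of Lemma~\mref{lem:nest} and run an exhaustive case analysis on the forms of $\alpha$ and $\beta$. By symmetry I will assume $q_{1}=q_{2}|_{q}$ for some $q\in\frakM^{\star}(X)$, so that Remark~\mref{rem} gives $\lbar{\beta}=q|_{\lbar{\alpha}}$, i.e.\ $\lbar{\alpha}$ is a bracketed subword of $\lbar{\beta}$. The four possible types of $\alpha$ (and independently $\beta$) are $\phi(w)$, $\psi(u,v)$, $\varphi(w)$, $\chi(w)$; the first two have left-hand sides of breadth $1$, while the last two have breadth $2$, and this breadth constraint will govern how $\lbar{\alpha}$ can sit inside $\lbar{\beta}$.

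For the pairs with $\alpha,\beta\in\{\phi(\cdot),\psi(\cdot,\cdot)\}$, the argument will be verbatim Cases~1--3 from the proof of Lemma~\mref{lem:nest}, so the new work is the pairs in which at least one of $\alpha,\beta$ is a $\varphi$- or $\chi$-rule. I plan to organise those by the pair of breadths. When $|\lbar{\alpha}|=2$ and $|\lbar{\beta}|=1$, a breadth count forces $\lbar{\alpha}$ to sit strictly inside the unique outer bracket of $\lbar{\beta}$, hence inside the inner word $w$ or $uv$, giving a $p\in\frakM^{\star}(X)$ with that inner word equal to $p|_{\lbar{\alpha}}$. When $|\lbar{\alpha}|=1$ and $|\lbar{\beta}|=2$, the same count forces $\lbar{\alpha}$ either to equal one of the two top-level factors of $\lbar{\beta}$ or to sit strictly inside one of them. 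When $|\lbar{\alpha}|=|\lbar{\beta}|=2$, either $\lbar{\alpha}=\lbar{\beta}$ (so $\alpha=\beta$ and the fork is trivial), or $\lbar{\alpha}$ sits strictly inside a factor of $\lbar{\beta}$.

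In each resulting subcase I will compute $q_{1}|_{\re{\alpha}}$ by firing the $\alpha$-rule and $q_{2}|_{\re{\beta}}$ by firing the $\beta$-rule, and then chase both reducts to a common descendant using the four rewriting rules $\phi,\psi,\varphi,\chi$. The hard part will be the cases where both rules have breadth $2$ and the nesting is nontrivial, for instance $(\varphi(u),\chi(v))$ with $\lc u\rc u$ sitting strictly inside the leading $v$ of $v\lc v\rc$. There, after applying the outer rule on one side and the inner rule on the other, the two reducts will both contain a residual pattern of the form $s\lc s\rc$ or $\lc s\rc s$ (where $s$ is a substituted piece of $p$), which can be collapsed by a further $\chi$- or $\varphi$-step so that the two chains meet. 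All the remaining new subcases are short chases of length at most three, similar to the worked examples in Lemma~\mref{lem:nest}, and I expect them to pose no additional difficulty.
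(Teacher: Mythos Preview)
Your overall approach---exhaustive case analysis on the types of $\alpha,\beta$, reusing Lemma~\mref{lem:nest} for the $\{\phi,\psi\}$ pairs---matches the paper's. However, two concrete problems in your plan would leave gaps if carried out as written.

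First, the breadth bookkeeping is wrong. The left-hand side $\lbar{\varphi(w)}=\lc w\rc w$ has breadth $1+|w|$, not $2$; likewise for $\chi$. So the trichotomy ``$|\lbar\alpha|=1$ versus $|\lbar\alpha|=2$'' does not partition the cases, and statements like ``the two top-level factors of $\lbar\beta$'' are not well-posed when $|w|>1$. The paper does not organise by breadth but directly by the type pair, considering both nesting directions separately when the types differ.

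Second---and this is the substantive omission---you misidentify the hard subcase. The genuinely delicate situation is not ``both rules have breadth $2$''; the paper's Cases~5--7 (both rules among $\varphi,\chi$) are in fact short. The hard case is $\lbar\beta=\lbar{\psi(u_1,u_2)}=\lc u_1u_2\rc$ with $\lbar\alpha=\lc v\rc v$ (or $v\lc v\rc$) sitting inside $u_1u_2$ \emph{straddling} the split point, i.e.\ $u_1=u_1'\lc v\rc v_1$, $u_2=v_2u_2'$ with $v=v_1v_2$. After applying $\psi$ on one side you get $\lc v_2u_2'\rc\lc u_1'\lc v_1v_2\rc v_1\rc$, and joining this with $\lc u_1'u_2'\rc$ takes a six-step chase through $\psi,\varphi,\psi,\psi,\phi,\varphi$ (the paper's Subcase~3.2.2), well beyond the ``at most three'' you anticipate. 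Your sketch (``giving a $p$ with that inner word equal to $p|_{\lbar\alpha}$'') does not detect this straddling possibility at all, because it treats $u_1u_2$ as a single inner word rather than two pieces that $\psi$ is about to separate.
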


\begin{proof}
By Definition~\mref{defn:bwrel}(\mref{it:bint}), we can assume $q_1|_{\lbar{\alpha}}=w=q_2|_{\lbar{\beta}}$ for some $w\in \frakM(X).$
From Eq.~(\mref{eq:defns2}), there are four choices for each $\alpha$ and $\beta$.
In view of symmetry, there are ten pairs of $\alpha$ and $\beta$ to consider.
If $\alpha, \beta \in \{\phi(w), \psi(u, v)\mid w\in\frakM(X), u,v \in \frakM(X)\setminus\{1\}\},$ the result follows from Lemma~\mref{lem:nest}.
So three cases have been done and we are left to consider the following seven cases.

\noindent{\bf Case 1.} $\lbar{\alpha} =\phi(u)=\lc\lc u \rc\rc$ and $\lbar{\beta} =\varphi(v)=\lc v \rc v.$
Since either $\lbar{\alpha}$ is subword of $\lbar{\beta}$ or $\lbar{\beta}$ is subword of $\lbar{\alpha}$, we have the following two subcases.

\noindent{\bf Subcase 1.1.} $q_2=q_1|_q$ for some $q\in \frakM^\star(X).$
By Remark~\mref{rem}, ${\lc\lc u \rc\rc}=q|_{\lc v \rc v},$ i.e., $\lc v \rc v$ is a bracketed subword of $\lc\lc u \rc\rc$.
Note that $\lc v\rc v\neq \lc\lc u\rc\rc, \lc u\rc$ by comparing the breadth.
So $\lc v \rc v$ is a bracketed subword of $u$, i.e., there exists $p\in\frakM^\star(X)$ such that $u=p|_{\lc v \rc v }$ and $q=\lc \lc p \rc\rc$. Thus
\begin{align*}
q_1|_{\re{\alpha}}=& q_1|_u=q_1|_{p|_{\lc v \rc v}}\rightarrow_{\varphi}q_1|_{p|_1},\\
q_2|_{\re{\beta}}=& q_2|_1=q_1|_{q|_{1}}=q_1|_{\lc\lc p\rc\rc|_1}=q_1|_{\lc \lc p|_1\rc\rc}\rightarrow_{\phi}q_1|_{p|_1},
\end{align*}
and so $\stars{q_1}{\re{\alpha}} \downarrow_{\Pi_S} \stars{q_2}{\re{\beta}}$.

\noindent{\bf Subcase 1.2.} $q_1=q_2|_q$ for some $q\in \frakM^\star(X).$ Then
by Remark~\mref{rem}, $q|_{\lc\lc u \rc\rc}=\lc v\rc v,$ i.e., $\lc\lc u \rc\rc$ is a bracketed subword of $\lc v\rc v$. Note that
$\lc\lc u\rc\rc\neq \lc v\rc v$. So there are two points to consider.

\noindent{\bf Subcase 1.2.1.} $\lc\lc u \rc\rc=\lc v\rc$. Then $v=\lc u \rc.$ Since
$q|_{\lc\lc u \rc\rc}=\lc v\rc v= \lc\lc u \rc\rc\lc u \rc$, we have $q=\star \lc u \rc$. Thus
$$q_1|_{\re{\alpha}}=q_1|_u=q_2|_{q|_{u}}=q_2|_{u\lc u \rc}\rightarrow_{\chi}q_2|_{1}\, \,\text{and} \,\,q_2|_{\re{\beta}}=q_2|_{1}$$
and so $\stars{q_1}{\re{\alpha}} \downarrow_{\Pi_S} \stars{q_2}{\re{\beta}}$.

\noindent{\bf Subcase 1.2.2.} $\lc\lc u \rc\rc$ is a bracketed subword of  $v$, i.e., there exists $p\in \frakM^\star(X)$ such that $v=p|_{\lc\lc u \rc\rc}$. Then $q=\lc p \rc v$ or $q=\lc v\rc p$.
If $q=\lc p \rc v$, then
$$q_1|_{\re{\alpha}}=q_1|_u=q_2|_{q|_{u}}=q_2|_{\lc p|_u \rc v}=q_2|_{\lc p|_u \rc p|_{\lc \lc u \rc\rc} }\rightarrow_{\phi}q_2|_{\lc p|_u \rc p|_{u}}\rightarrow_{\varphi}q_2|_1.$$
If $q=\lc v \rc p$, then
$$q_1|_{\re{\alpha}}=q_1|_u=q_2|_{q|_{u}}=q_2|_{\lc v \rc p|_u}=q_2|_{\lc p|_{\lc \lc u \rc\rc} \rc p|_{ u} }\rightarrow_{\phi}q_2|_{\lc p|_u \rc p|_{u}}\rightarrow_{\varphi}q_2|_1.$$
Note that $q_2|_{\re{\beta}}=q_2|_{1}.$ So we conclude $\stars{q_1}{\re{\alpha}} \downarrow_{\Pi_S} \stars{q_2}{\re{\beta}}$.

\noindent{\bf Case 2.} $\lbar{\alpha} =\phi(u)=\lc\lc u \rc\rc$ and $\lbar{\beta} = \chi(v)=v\lc v \rc.$ This is similar to Case 1.

\noindent{\bf Case 3.} $\lbar{\alpha} =\psi(u_1,u_2)=\lc u_1 u_2 \rc$ and $\lbar{\beta} =\varphi(v)=\lc v \rc v.$
Again since either $\lbar{\alpha}$ is subword of $\lbar{\beta}$ or $\lbar{\beta}$ is subword of $\lbar{\alpha}$, there are two subcases to consider.

\noindent{\bf Subcase 3.1.} $q_1=q_2|_q$ for some $q\in \frakM(X)$. Then by Remark~\mref{rem}, $q|_{\lc u_1 u_2 \rc}=\lc v \rc v$, i.e., $\lc u_1 u_2 \rc$ is  a bracketed subword of $\lc v \rc v$. Note that $\lc v \rc v\neq\lc u_1 u_2\rc$ by comparing the breadth.

\noindent{\bf Subcase 3.1.1.} $\lc u_1 u_2\rc=\lc v \rc$. Then
$$v=u_1 u_2,\, q|_{\lc u_1 u_2 \rc}=\lc v \rc v=\lc u_1 u_2 \rc  u_1 u_2\,\text{ and }\, q=\star u_1 u_2$$
and so
$$q_1|_{\re{\alpha}}=q_1|_{\lc u_2\rc\lc u_1 \rc}=q_2|_{q|_{\lc u_2\rc\lc u_1 \rc}}=q_2|_{\star u_1u_2|_{\lc u_2\rc\lc u_1 \rc}}=q_2|_{\lc u_2\rc\lc u_1\rc u_1 u_2}\rightarrow_{\varphi}q_2|_{\lc u_2 \rc u_2}\rightarrow_{\varphi}q_2|_1.$$
Since $q_2|_{\re{\beta}}=q_2|_{1},$  we get $\stars{q_1}{\re{\alpha}} \downarrow_{\Pi_S} \stars{q_2}{\re{\beta}}$.

\noindent{\bf Subcase 3.1.2.} $\lc u_1 u_2\rc$ is a bracketed subword of  $v$, i.e., there exists $p\in \frakM^\star(X)$ such that $v=p|_{\lc u_1 u_2\rc}$. Then $q=\lc p \rc v$ or $q=\lc v\rc p$.
If $q=\lc p \rc v$, then
$$q_1|_{\re{\alpha}}=q_1|_{\lc u_2\rc\lc u_1\rc}=q_2|_{q|_{\lc u_2\rc\lc u_1\rc}}=q_2|_{\lc p|_{\lc u_2\rc\lc u_1\rc} \rc v}=q_2|_{\lc p|_{\lc u_2\rc\lc u_1\rc} \rc p|_{\lc u_1 u_2\rc} }\rightarrow_{\psi}q_2|_{\lc p|_{\lc u_2\rc\lc u_1\rc} \rc p|_{{\lc u_2\rc\lc u_1\rc}}}\rightarrow_{\varphi}q_2|_1,$$
If $q=\lc v \rc p$, then
$$q_1|_{\re{\alpha}}=q_1|_{\lc u_2\rc\lc u_1\rc}=q_2|_{q|_{\lc u_2\rc\lc u_1\rc}}=q_2|_{\lc v \rc p|_{\lc u_2\rc\lc u_1\rc}}=q_2|_{\lc p|_{\lc u_1 u_2\rc}\rc  p|_{\lc u_2\rc\lc u_1\rc} }\rightarrow_{\psi}q_2|_{\lc p|_{\lc u_2\rc\lc u_1\rc} \rc p|_{\lc u_2\rc\lc u_1\rc}}\rightarrow_{\varphi}q_2|_1.$$
Since $q_2|_{\re{\beta}}=q_2|_{1},$ we conclude that $\stars{q_1}{\re{\alpha}} \downarrow_{\Pi_S} \stars{q_2}{\re{\beta}}$.

\noindent{\bf Subcase 3.2.} $q_2=q_1|_q$ for some $q\in \frakM^{\star}(X)$. Then by Remark~\mref{rem}, $q|_{\lc v \rc v}=\lc u_1 u_2 \rc$, i.e., $\lc v \rc v$ is a bracketed subword of $\lc u_1 u_2 \rc$. Note that $\lc v \rc v\neq\lc u_1 u_2\rc.$
Thus we can assume $\lc v\rc v$ is a bracketed subword of $u_1u_2$.

\noindent{\bf Subcase 3.2.1.} $\lc v\rc v$ is a bracketed subword of $u_1$ or $u_2$. By symmetry, we only need to
consider the former. Then there exists $p\in \frakM^{\star}(X)$ such that $u_1=p|_{\lc v\rc v}$ and $q=\lc p u_2\rc.$ Consequently,
\begin{align*}
q_1|_{\re{\alpha}}=q_1|_{\lc u_2\rc\lc u_1\rc}=q_1|_{\lc u_2\rc\lc p|_{\lc v \rc v}\rc}\rightarrow_{\varphi} q_1|_{\lc u_2\rc\lc p|_1\rc},\\
q_2|_{\re{\beta}}=q_2|_{1}=q_1|_{q|_{1}}=q_1|_{\lc p|_{1}u_2\rc}\rightarrow_{\psi}q_1|_{\lc u_2\rc\lc p|_1\rc},
\end{align*}
and so $\stars{q_1}{\re{\alpha}} \downarrow_{\Pi_S} \stars{q_2}{\re{\beta}}$.

\noindent{\bf Subcase 3.2.2.} $\lc v\rc v$ is neither a bracketed subword of $u_1$ nor a bracketed subword of $u_2$.
Then
$$v=v_1 v_2, \, u_1 = u_1' \lc v\rc v_1\,\text{ and }\, u_2 = v_2 u_2'$$ for some $v_1, v_2, u_1', u_2'\in \frakM(X)$ with $v_2\neq 1$.
Consequently,
$$\lc u_1'\star u_2' \rc |_{\lc v\rc v} = \lc u_1'\lc v\rc v u_2' \rc =  \lc u_1'\lc v\rc v_1 v_2 u_2' \rc  = \lc u_1 u_2\rc = q|_{\lc v\rc v}
\, \text{ and }\, \lc u_1'\star u_2' \rc  = q.$$
Thus
\begin{align*}
q_1|_{\re{\alpha}}&= q_1|_{\lc u_2\rc\lc u_1\rc}= q_1|_{\lc v_2 u_2'\rc\lc u_1' \lc v_1 v_2\rc v_1 \rc}  \rightarrow_{\psi}
q_1|_{\lc v_2 u_2'\rc\lc u_1' \lc v_2 \rc \lc v_1\rc v_1 \rc} \rightarrow_{\varphi} q_1|_{\lc v_2 u_2'\rc\lc u_1' \lc v_2 \rc \rc} \\
&\rightarrow_{\psi} q_1|_{\lc u_2'\rc \lc v_2\rc \lc u_1' \lc v_2 \rc \rc} \rightarrow_{\psi} q_1|_{\lc u_2'\rc \lc v_2\rc \lc \lc v_2 \rc \rc \lc u_1' \rc}
\rightarrow_{\phi} q_1|_{\lc u_2'\rc \lc v_2\rc v_2\lc u_1' \rc} \rightarrow_{\varphi} q_1|_{\lc u_2'\rc \lc u_1' \rc},
 \\
q_2|_{\re{\beta}}&=q_2|_{1}=q_1|_{q|_{1}}=q_1|_{\lc u_1'u_2'\rc}\rightarrow_{\psi}q_1|_{\lc u_2'\rc \lc u_1' \rc},
\end{align*}
and so $\stars{q_1}{\re{\alpha}} \downarrow_{\Pi_S} \stars{q_2}{\re{\beta}}$.

\noindent{\bf Case 4.} $\lbar{\alpha} =\psi(u_1,u_2)=\lc u_1 u_2 \rc$ and $\lbar{\beta} = \chi(v)=v\lc v \rc.$ This is similar to Case 3.

\noindent{\bf Case 5.} $\lbar{\alpha} =\varphi(u)=\lc u \rc u$ and $\lbar{\beta} =\varphi(v)=\lc v \rc v.$
By symmetry, we may assume $q_1=q_2|_q$ for some $q\in \frakM^{\star}(X)$. From Remark~\mref{rem}, $q|_{\lc u \rc u}=\lc v \rc v$, i.e., $\lc u \rc u$ is a bracketed subword of $\lc v \rc v$.
If $\lc u \rc u=\lc v \rc v$, then $u = v$, $\alpha = \beta$, $q_1 = q_2$ and so $\stars{q_1}{\re{\alpha}} \downarrow_{\Pi_S} \stars{q_2}{\re{\beta}}$.
Suppose $\lc u \rc u \neq \lc v \rc v$. Since $\lc u \rc u\neq\lc v\rc$, $\lc u\rc u$ is a bracketed subword of $v$, i.e., there exists $p\in \frakM^{\star}(X)$ such that $v=p|_{\lc u\rc u}$. Then $q=\lc p \rc v$ or $q=\lc v \rc p$.
If $q=\lc p \rc v,$ then
$$q_1|_{\re{\alpha}}=q_1|_{1}=q_2|_{q|_{1}}=q_2|_{\lc p\rc|_{1}v}=q_2|_{\lc p|_{1}\rc p|_{\lc u\rc u}}\rightarrow_{\varphi} q_2|_{\lc p|_1\rc  p|_1}\rightarrow_{\varphi}q_2|_1.$$
If $q=\lc p \rc v,$ then
$$q_1|_{\re{\alpha}}=q_1|_{1}=q_2|_{q|_{1}}=q_2|_{\lc v\rc p|_{1}}=q_2|_{\lc p|_{\lc u\rc u}\rc p|_{1 }}\rightarrow_{\varphi} q_2|_{\lc p|_1\rc  p|_1}\rightarrow_{\varphi}q_2|_1.$$
Since $q_2|_{\re{\beta}}=q_2|_{1}$, we conclude that $\stars{q_1}{\re{\alpha}} \downarrow_{\Pi_S} \stars{q_2}{\re{\beta}}$.

\noindent{\bf Case 6.} $\lbar{\alpha} =\chi(u)=u\lc u\rc $ and $\lbar{\beta} =\chi(v)= v\lc v \rc.$ This is similar to Case 5.

\noindent{\bf Case 7.} $\lbar{\alpha}=\varphi(u)=\lc u\rc u$ and $\lbar{\beta} =\chi(v)=v \lc v \rc .$ This is also similar to Case 5.
\end{proof}

\begin{theorem}
Let $X$ be a set and $S$ the binary relation given in Eq.~(\mref{eq:defns2}). With the monomial order given in~\cite{GSZ}, we have
\begin{enumerate}
\item the term-rewriting system $\Pi_S$  is convergent. \mlabel{it:conv2}

\item the set $\irr(S) = \frakM(X)\setminus {\rm Dom } (\Pi_S)$ is a section of the free group $\frakM(X)/\langle S \rangle$.  \mlabel{it:section2}
\end{enumerate}
\mlabel{thm:sectionI}
\end{theorem}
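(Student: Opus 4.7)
The plan is to follow exactly the same template as the proof of Theorem~\mref{thm:section}, with the new ingredients (handling of intersecting placements) already prepared by Lemmas~\mref{lem:sepe},~\mref{lem:int} and~\mref{lem:ne} of this subsection. For Item~(\mref{it:conv2}), termination of $\Pi_S$ comes for free from Lemma~\mref{lem:termi} since $\leq$ is a monomial order. So the only real task is to establish confluence; thanks to Newman's Lemma~(\mref{lem:newman}) it is enough to verify local confluence.

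To do that, I would take an arbitrary local fork
\begin{equation*}
 q_1|_{\re{\alpha}} \prescript{}{\Pi_S}\leftarrow q_1|_{\lbar{\alpha}}= w = q_2|_{\lbar{\beta}} \topi q_2|_{\re{\beta}},
\end{equation*}
with $q_1,q_2\in \frakM^\star(X)$ and $\alpha,\beta\in S\cup S^{-1}$, and invoke Lemma~\mref{lem:thrrel} to split into three cases according to whether the placements $(\lbar{\alpha},q_1)$ and $(\lbar{\beta},q_2)$ are separated, nested, or intersecting. The separated case is immediate from Lemma~\mref{lem:sepe} (the general argument works regardless of the concrete shape of $\alpha,\beta$), the nested case from Lemma~\mref{lem:ne}, and the intersecting case from Lemma~\mref{lem:int}. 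This is the crucial structural difference with Theorem~\mref{thm:section}: there, both $\lbar{\phi(u)}$ and $\lbar{\psi(u,v)}$ have breadth $1$, so intersections could be ruled out by Lemma~\mref{lem:inter}, whereas now $\lbar{\varphi(u)}=\lc u\rc u$ and $\lbar{\chi(u)}=u\lc u\rc$ have breadth $2$, and intersections genuinely occur and must be resolved by Lemma~\mref{lem:int}. With all three subcases yielding $\stars{q_1}{\re{\alpha}} \downarrow_{\Pi_S}\stars{q_2}{\re{\beta}}$, the system $\Pi_S$ is locally confluent, hence confluent by Newman, hence convergent.

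For Item~(\mref{it:section2}), I would simply combine Item~(\mref{it:conv2}) with the general criterion Theorem~\mref{thm:sect}, which exactly asserts that $\irr(S)$ is a section of $\frakM(X)/\langle S\rangle$ when $\Pi_S$ is convergent.

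The only conceptually delicate point is the case split itself: one must be careful that every pair $(\alpha,\beta)\in (S\cup S^{-1})^2$ with $q_1|_{\lbar{\alpha}}=q_2|_{\lbar{\beta}}$ is actually covered by one of Lemmas~\mref{lem:sepe},~\mref{lem:int},~\mref{lem:ne}. Since those lemmas are stated for $\alpha,\beta\in S$ (the rewriting rules being oriented by $\leq$, so only the ordered direction $t>v$ matters in $\Pi_S$), this is automatic. Thus the proof reduces to a short assembly of already established lemmas, and no further case analysis is needed beyond what is done in those lemmas.
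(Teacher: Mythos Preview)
Your proposal is correct and follows essentially the same approach as the paper's proof, which simply says that part~(\mref{it:conv2}) follows by the argument of Theorem~\mref{thm:section}(\mref{it:conv1}) together with Lemmas~\mref{lem:sepe},~\mref{lem:int} and~\mref{lem:ne}, and part~(\mref{it:section2}) follows from part~(\mref{it:conv2}) and Theorem~\mref{thm:sect}. One minor imprecision: $\lbar{\varphi(u)}=\lc u\rc u$ and $\lbar{\chi(u)}=u\lc u\rc$ have breadth $1+|u|$, not necessarily $2$, but your point that they need not have breadth $1$ (so Lemma~\mref{lem:inter} no longer rules out intersections) is the right one.
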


\begin{proof}
(\mref{it:conv2}) With a similar argument to the proof of Theorem~\mref{thm:section}(\mref{it:conv1}),
the result follows from Lemmas~\mref{lem:sepe}, \mref{lem:int} and~\mref{lem:ne}.

(\mref{it:section2}) This part follows from Item~(\mref{it:conv1}) and Theorem~\mref{thm:section}.
\end{proof}

\begin{remark}
It is well known that reduced words are elements in the free group on a set $X$~\cite{Hung}.
The set $\irr(S) = \frakM(X)\setminus {\rm Dom } (\Pi_S)$ obtained in Theorem~\mref{thm:sectionI}, of course,  coincides with the set of reduced words.
\end{remark}

\smallskip

\noindent {\bf Acknowledgements}:
 The authors are supported by the National Natural Science Foundation of
China (No.~11771191), the Fundamental Research Funds for the Central
Universities (No.~lzujbky-2017-162), and the Natural Science Foundation of Gansu Province (Grant
No.~17JR5RA175) and Shandong Province (No. ZR2016AM02).

We thank the anonymous referee for valuable suggestions helping to improve the paper.

\end{document}